\numberwithin{equation}{section}
\numberwithin{figure}{section}
\numberwithin{table}{section}
\def\algbackskip{\hskip-\ALG@thistlm}
\definecolor{lightgray}{gray}{0.9}
\definecolor{bluegreen}{rgb}{0.0, 0.87, 0.87}
\newtheorem{remark}{Remark}
\newtheorem{definition}{Definition}
\newtheorem{theorem}{Theorem}
\newcommand{\Sn}{ {\mathbb{S}_n} }   
\newtheorem{corollary}{Corollary}[section]
\newcommand{\C}{\mathbb{C}}
\renewcommand{\S}{{\mathcal S}}
\newcommand{\M}{{\mathcal M}}
\newcommand{\V}{{\mathcal V}}
\newcommand{\W}{{\mathcal W}}
\newcommand{\R}{\mathbb{R}}
\newcommand{\XWpd}{ {\mathbb{X}^{\raisebox{0.2em}{{\fontsize{3}{2}\selectfont $>$}}}} }  
\newcommand{\XWpdpd}{ {\mathbb{X}^{\raisebox{0.2em}{{\fontsize{3}{2}\selectfont $\gg$}}}} }  
\DeclareMathOperator{\diag}{diag}
\DeclareMathOperator{\rank}{rank}
\newcommand{\ie}{i.\,e.\ }
\newcommand{\SYS}{\texttt{SYS}}
\newcommand{\sys}{\texttt{sys}}
\begin{document}

\title{Parameterized Interpolation of Passive Systems}

\author[$\dagger$]{Peter Benner}
\affil[$\dagger$]{Max Planck Institute for Dynamics of Complex Technical Systems, 39106 Magdeburg, Germany.\authorcr
	\email{benner@mpi-magdeburg.mpg.de}, \orcid{0000-0003-3362-4103}
}

\author[$\ddagger$]{Pawan Goyal}
\affil[$\ddagger$]{Max Planck Institute for Dynamics of Complex Technical Systems, 39106 Magdeburg, Germany.\authorcr
	\email{goyalp@mpi-magdeburg.mpg.de}, \orcid{0000-0003-3072-7780}
}

\author[$\ast$]{Paul Van Dooren}
\affil[$\ast$]{Department of Mathematical Engineering, Universit\'e catholique de Louvain in Louvain--la--Neuve, Belgium.\authorcr
	\email{paul.vandooren@uclouvain.be}, \orcid{0000-0002-0115-9932}
}

\shortauthor{P. Benner, P. Goyal, and P. Van Dooren}
\shortdate{}

\keywords{Tangential interpolation, passive systems, passivity radius, robustness}

\msc{93A30, 93C05, 93D09}

\abstract{%
We study the tangential interpolation problem for a passive transfer function in standard state-space form. 
We derive new interpolation conditions based on the computation of a deflating subspace associated with a selection of spectral zeros of a parameterized para-Hermitian transfer function. We show that this technique improves the robustness of the low order model and that it can also be applied to non-passive systems, provided they have sufficiently many spectral zeros in the open right half plane. We analyze the accuracy needed for the computation of the deflating subspace, in order to still have a passive lower order model and we derive a novel selection procedure of spectral zeros in order to obtain low order models with a small approximation error.
}

\novelty{
	\begin{itemize}
		\item We investigate an tangential interpolation problem for passive systems.
		\item We propose interpolation conditions based on a deflating subspace.
		\item We discuss a construction of passive low-order models using the deflating subspace associated with a selection of spectral zeros.
	\end{itemize}
} 
\maketitle

\section{Introduction} \label{sec:Intro}
We consider linear and finite dimensional dynamical systems that are \emph{passive}. We restrict ourselves to continuous-time systems that can be represented in standard 
state-space form with real coefficients and real inputs, outputs and states~:
\begin{equation} \label{gstatespace}
 \begin{array}{rcl} \dot x(t) & = & Ax(t) + B u(t),\ x(0)=0,\\
y(t)&=& Cx(t)+Du(t).
\end{array}
\end{equation}
Denoting real and complex $n$-vectors ($n\times m$ matrices) by $\mathbb R^n$, $\mathbb C^{n}$ ($\mathbb R^{n \times m}$, $\mathbb{C}^{n \times m}$), respectively, 
then $u:\mathbb R\to\mathbb{R}^m$,   $x:\mathbb R\to \mathbb{R}^n$,  and  $y:\mathbb R\to\mathbb{R}^m$  are vector-valued functions denoting the \emph{input}, \emph{state}, 
and \emph{output} of the system, and the coefficient matrices satisfy $A \in \mathbb{R}^{n \times n}$, $B\in \mathbb{R}^{n \times m}$, $C\in \mathbb{R}^{m \times n}$, 
and  $D\in \mathbb{R}^{m \times m}$.

Model reduction of such systems has been a major research topic for the last three decades and led to a wealth of different approaches, as illustrated in several survey volumes \cite{Ant05}, \cite{BenMS05}, \cite{BenCOW17}. One of the proposed approaches is based on tangential interpolation \cite{GVV04}, \cite{ACA05}. This technique was originally developed for arbitrary types of rational transfer functions \cite{GVV04}, but an important drawback is that some critical properties -- such
as stability or passivity -- are not easy to satisfy and require a careful selection of interpolation conditions.
It was shown in \cite{ACA05} that when using spectral zeros of a given transfer function as interpolation conditions, then one does preserve passivity in the reduced-order model,  at least for the single-input/single-output case. This was extended by Sorensen in  \cite{Sor05} to the multi-input/multi-output case by making use of deflating subspace calculations and the Kalman-Yakubovich–Popov conditions for passivity. Numerical and structure-preserving algorithms to compute reduced-order models based on this approach are suggested in \cite{BenF06}. The link between both methods was later on pointed out by Fanizza et al. \cite{FKLN07}, who also make the connection to the so-called covariance extension problem. In the present paper, we further extend the approach of Sorensen by applying it to a class of systems that are parameterized by a scalar parameter.  The new contributions of this paper are threefold~:
\begin{enumerate}
\item we show that we can apply the deflating subspace idea to a class of parameterized systems, which improves the robustness of the reduced-order system by increasing its passivity radius,
\item we derive a novel selection technique of the subset of spectral zeros used for model reduction, which attempts to minimize the approximation error, and
\item we show that the method can be applied to non-passive systems and still constructs passive lower order models, under certain conditions.
\end{enumerate}
Because of the last property, we give in this paper a new derivation of Sorensen's results in order to show that it may apply also to non-passive systems.

The notation used in the paper is as follows. The Hermitian (or conjugate) transpose (transpose) of a vector or matrix $V$ is denoted by $V^{\mathsf{H}}$ ($V^{\mathsf{T}}$) and the identity matrix is denoted by $I_n$ or $I$ if the dimension is clear. We require that input and output dimensions are equal to $m$ since we want to interpolate with passive transfer functions.  Throughout this article we will use the following notation.
We denote the set of symmetric matrices in $\mathbb{R}^{n \times n}$ by $\Sn$.
Positive definiteness (semi-definiteness) of  $M\in \Sn$ is denoted by $M\succ 0$ ($M\succeq 0$). 
In Section \ref{sec:PH}, we recall the properties of passive and of port-Hamiltonian systems in order to define the robustness measure known as the passivity radius. In Section \ref{sec:SZ} we then recall the results of Sorensen on interpolation
in spectral zeros via deflating subspace calculations. This is then extended in Section \ref{sec:Param} to the novel technique of parameterized passive interpolation. The selection technique to find appropriate spectral zeros to minimize the  
approximation error is described in Section \ref{sec:Speczeros}, and the resulting robustness property is briefly described in Section \ref{sec:Robust}. We give numerical experiments that illustrate our new method and its properties in Section \ref{sec:Numerical} and give come concluding remarks in Section \ref{sec:Conclusion}.

\section{Passive systems and port-Hamiltonian realizations} \label{sec:PH}

Passive systems are well studied in the continuous-time case.
We briefly recall some important properties following \cite{Wil72b}, and refer to the literature for a more detailed survey. We consider continuous-time systems with a rational transfer matrix $Z(s)$ and  
define the following rational matrix function of $s\in \C$~: \[ \Phi(s):=Z^{\mathsf{T}}(-s) + Z(s), \] 
which is said to be \emph{para-Hermitian} since $ \Phi(-s)^{\mathsf{T}}=\Phi(s)$. It therefore coincides with two times the Hermitian part of $Z(s)$ on the  $\imath\omega$ axis:
\[ \Phi(\imath \omega)=[Z(\imath \omega)]{^{\mathsf{H}}} + Z(\imath \omega). \] 
\begin{definition}
The rational transfer function $Z(s)$ is called \emph{strictly positive-real} \/if  $\Phi(\imath\omega)\succ 0$
for all $\omega\in \mathbb{R}$ and it is called \emph{positive-real} if $\Phi(\imath \omega)\succeq 0 $ for all $\ \omega\in \mathbb{R}$.

The transfer function $Z(s)$ is called \emph{asymptotically stable} if the eigenvalues of $A$ are in the open left half plane, and it is called 
\emph{stable} if the eigenvalues of $A$ are in the closed left half plane, with any eigenvalues occurring on the imaginary axis being semi-simple.

The transfer function $Z(s)$ is called \emph{strictly passive} \/if  it is strictly positive-real and asymptotically stable and it is called \emph{passive} if it is positive real and stable with polar residues that are Hermitian and positive semi-definite for every pole on the imaginary axis.
\end{definition}

\begin{remark}
In the classical circuit theory literature the notion of positive realness is phrased differently and it implies stability. For 
rational transfer functions it is therefore equivalent to passivity \cite{AndV06}. In this paper, though, we will use the above modified definition of
positive realness.
\end{remark}

In this paper we focus on systems that are strictly passive, which implies that the transfer matrix has no infinite or imaginary axis poles, and hence is proper.
Moreover, $\Phi(\imath\omega)\succ 0$ at $\omega=\infty$ implies that $D^{\mathsf{T}}+D\succ 0$ and that $\Phi(s)$ is regular. We will see that this restriction simplifies our discussion significantly. 
This is also a reasonable restriction because passive systems can be viewed as limiting cases of strictly passive systems. 

Since the transfer function is proper, we can represent it in standard state-space form $Z(s)=C(sI_n-A)^{-1}B+D$ and we will assume throughout the paper that this realization is 
minimal (i.e. controllable and observable).
We can associate with $\Phi(s)$ a system matrix $S(s)$ which is a generalized state-space realization of $\Phi(s)$~:
\begin{equation} \label{statespace}
{\renewcommand{\arraystretch}{1.2}
S(s) :=
\left[ \begin{array}{cc|c} 0 & A-sI_n & B \\
	A^{\mathsf{T}}+sI_n & 0 & C^{\mathsf{T}} \\ \hline B^{\mathsf{T}} & C & D^{\mathsf{T}}+D  \end{array} \right].
	}
\end{equation}
If the quadruple $\M:=\left\{A,B,C,D\right\}$ is a minimal realization of a strictly passive transfer function $Z(s)$ of McMillan degree $n$,
then $S(s)$ is a minimal realization (in generalized state-space form) of $\Phi(s)$. This transfer function has indeed degree $2n$ since 
$Z(s)$ and $Z^{\mathsf{T}}(-s)$ have no common poles because of the assumption that $Z(s)$ is asymptotically stable. Since 
$ D^{\mathsf{T}}+D$ is nonsingular, the $2n$ finite eigenvalues of the pencil $S(s)$ are then the so-called {\em spectral zeros} 
of the strictly passive transfer function $Z(s)$. 

We can apply the following congruence transformation to $S(s)$, using a symmetric matrix $X$ :
$$S_X(s)  = 	\left[ \begin{array}{cc|c} I_n & 0 & 0 \\
		-X & I_n & 0 \\ \hline 0 & 0 & I_m  \end{array} \right] \! S(s) \!
	\left[ \begin{array}{cc|c} I_n & -X & 0 \\
		0 & I_n & 0 \\ \hline 0 & 0 & I_m  \end{array} \right]  $$
\begin{equation} \label{Xtransfo}
{\renewcommand{\arraystretch}{1.2}
	= \left[ \begin{array}{cc|c} 0 & A-sI_n & B \\
		A^{\mathsf{T}}+sI_n & -A^{\mathsf{T}}X-XA & C^{\mathsf{T}}-XB \\ \hline B^{\mathsf{T}} & C-B^{\mathsf{T}}X & D^{\mathsf{T}}+D  \end{array} \right]
		}
\end{equation}
without affecting the transfer function $\Phi(s)$ of this system matrix. If the following submatrix of $S_X(s)$
\begin{equation} \label{LMI}
\W(X,\M) := \left[
\begin{array}{cc}
 - A^{\mathsf{T}}X-X\,A & C^{\mathsf{T}} - X\,B \\
C- B^{\mathsf{T}}X & D^{\mathsf{T}}+D
\end{array}
\right]
\end{equation} 
is positive semi-definite, then it can be factored as indicated below
$$ {\renewcommand{\arraystretch}{1.2}
 \W(X,\M) = 
\left[ \begin{array}{c} C_G^{\mathsf{T}} \\ D_G^{\mathsf{T}} \end{array} \right]
\left[ \begin{array}{cc} C_G &  D_G \end{array} \right],
}
$$
from which it follows that
$$ {\renewcommand{\arraystretch}{1.2}
S_X(s)  =  \left[ \begin{array}{cc|c} 0 & A-sI_n & B \\
A^{\mathsf{T}}+sI_n & C_G^{\mathsf{T}} C_G &  C_G^{\mathsf{T}}D_G \\ \hline B^{\mathsf{T}} &  
D_G^{\mathsf{T}} C_G & D_G^{\mathsf{T}}D_G  \end{array} \right]
}
$$
and hence $G(s) := C_G(s\,I_n - A)^{-1}B + D_G$ 
is the right factor of the symmetric factorization $\Phi(s)= G^{\mathsf{T}}(- s) G(s)$. This then implies that $\Phi(s)$ is positive semi-definite on the $\imath \omega$ axis.
Moreover, if $A$ is assumed to be asymptotically stable, then the matrix $X$ in \eqref{LMI} must be positive definite.
This discussion is an intuitive explanation of the following result, a proof of which can be found in \cite{Wil72b}, \cite{GeninHNSVX02}.

\begin{theorem} \label{th:LMI}
Let  $\M:=\left\{A,B,C,D\right\}$ be a minimal realization of a proper rational transfer function $Z(s)$ and let $\W(X,\M)$ be the associated matrix defined in \eqref{LMI}. Then  $Z(s)$\\
(i) is positive real if and only if there exists a real symmetric matrix $X\in \S$ such that
\begin{equation} \label{KYP0}    \W(X,\M) \succeq 0 ,
\end{equation}
(ii) is passive if and only if  there exists a real symmetric matrix $X\in \S$ such that
\begin{equation} \label{KYP1}    \W(X,\M) \succeq 0 , \quad X\succ 0,
\end{equation}
and (iii) is  strictly passive if and only if there exists a real symmetric matrix $X\in \S$ such that
\begin{equation} \label{KYP2}    \W(X,\M) \succ 0 , \quad X \succ 0.
\end{equation}
\end{theorem}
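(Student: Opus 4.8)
The plan is to establish each of the three equivalences by proving the two implications separately, and to reduce parts (i) and (ii) to part (iii) by a perturbation argument. The backbone of the whole argument is an algebraic identity read off from the congruence \eqref{Xtransfo}: for $X=X^{\mathsf{T}}$, every $\omega\in\R$ with $\imath\omega\notin\sigma(A)$, and every $v\in\C^m$, setting $w:=(\imath\omega I_n-A)^{-1}Bv$ and using $(\imath\omega I_n-A)w=Bv$ a direct computation collapses to
\begin{equation*}
  v^{\mathsf{H}}\Phi(\imath\omega)v \;=\; \begin{bmatrix} w\\ v\end{bmatrix}^{\mathsf{H}}\W(X,\M)\begin{bmatrix} w\\ v\end{bmatrix}.
\end{equation*}

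\textbf{Sufficiency.} If $\W(X,\M)\succeq 0$, the identity immediately gives $\Phi(\imath\omega)\succeq 0$ at all non-pole frequencies; expanding $Z$ in a Laurent series at an imaginary pole and letting $\imath\omega$ approach it shows both that $\Phi\succeq0$ on the whole extended imaginary axis and that the polar residue there is Hermitian positive semidefinite, which proves the ``if'' part of (i) and supplies the residue condition in the definition of passivity. For (ii), the $(1,1)$ block of a positive semidefinite matrix is itself positive semidefinite, so $A^{\mathsf{T}}X+XA\preceq0$; together with $X\succ0$ this Lyapunov inequality puts $\sigma(A)$ in the closed left half plane, and if $Av=\imath\omega v$ then $v$ lies in the kernel of $A^{\mathsf{T}}X+XA$, which excludes a Jordan block, so imaginary eigenvalues are semisimple and $Z$ is passive. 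For (iii), $\W(X,\M)\succ0$ gives $A^{\mathsf{T}}X+XA\prec0$ with $X\succ0$ (hence $A$ asymptotically stable), the identity yields $\Phi(\imath\omega)\succ0$ for finite $\omega$ since $[w^{\mathsf{T}}\ v^{\mathsf{T}}]^{\mathsf{T}}\neq0$ whenever $v\neq0$, and $\Phi(\infty)=D^{\mathsf{T}}+D$ is the positive definite $(2,2)$ block of $\W(X,\M)$; so $Z$ is strictly passive.

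\textbf{Necessity.} Assume first that $Z$ is strictly passive, so $A$ is asymptotically stable and $D^{\mathsf{T}}+D\succ0$; then $S(s)$ in \eqref{statespace} is regular, its $2n$ finite eigenvalues are the spectral zeros, and $\Phi(\imath\omega)\succ0$ forces these zeros to split into $n$ in the open left and $n$ in the open right half plane. Eliminating the nonsingular $(3,3)$ block by a Schur complement turns $\W(X,\M)\succ0$ into the strict Riccati inequality
\begin{equation*}
  \mathcal{R}(X):=-A^{\mathsf{T}}X-XA-(C^{\mathsf{T}}-XB)(D^{\mathsf{T}}+D)^{-1}(C-B^{\mathsf{T}}X)\;\succ\;0 ,
\end{equation*}
whose Hamiltonian pencil is precisely $S(s)$ after that elimination. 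Taking the $n$-dimensional deflating subspace of $S(s)$ associated with the left-half-plane spectral zeros and using the para-Hermitian structure $S(-s)^{\mathsf{T}}=S(s)$ (which makes the subspace isotropic for the underlying skew form), one shows it is a graph subspace $\operatorname{Im}\!\left[\begin{smallmatrix}I_n\\ X\end{smallmatrix}\right]$ with $X=X^{\mathsf{T}}$ solving $\mathcal{R}(X)=0$; a monotonicity argument, or exploiting the strictly-positive-real margin directly, then produces a symmetric solution of $\mathcal{R}(X)\succ0$, i.e.\ $\W(X,\M)\succ0$, and $X\succ0$ follows by running the Lyapunov argument in reverse using asymptotic stability of $A$ and minimality of $\M$. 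For the merely positive real and passive cases, $D^{\mathsf{T}}+D$ may be singular, so one perturbs: for passive $Z$ take $Z_\varepsilon(s):=Z(s+\varepsilon)+\varepsilon I_m$, realized by $\M_\varepsilon:=\{A-\varepsilon I_n,B,C,D+\varepsilon I_m\}$, which is strictly passive for small $\varepsilon>0$ (with a variant that also damps imaginary poles when $Z$ is only positive real); the strictly passive case gives $X_\varepsilon=X_\varepsilon^{\mathsf{T}}\succeq0$ with $\W(X_\varepsilon,\M_\varepsilon)\succeq0$, minimality yields a bound on $\|X_\varepsilon\|$ uniform in $\varepsilon$, and a convergent subsequence $X_\varepsilon\to X$ passes to the limit to give $\W(X,\M)\succeq0$ (with $X\succ0$ in the passive case, again from the Lyapunov/minimality argument or the residue hypothesis).

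\textbf{Main obstacle.} The sufficiency direction and the algebraic identity are routine; the real work is in necessity. The two delicate points are (a) verifying that the stable deflating subspace of the para-Hermitian pencil is a graph subspace delivering a \emph{symmetric} $X$ — this is exactly where the symplectic/para-Hermitian structure and the absence of imaginary-axis spectral zeros in the strictly passive case are used — and (b) the uniform boundedness of the perturbed solutions $X_\varepsilon$ needed to pass to the limit for parts (i) and (ii), which is the step that fails without minimality and, in part (ii), without the residue condition built into the definition of passivity. Since the present paper works throughout in the strictly passive regime, where the pencil is regular and the Riccati machinery is clean, it suffices for our purposes to carry out the strictly passive case in detail and to invoke \cite{Wil72b}, \cite{GeninHNSVX02} for the limiting estimates behind (i)--(ii).
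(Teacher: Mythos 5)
Your proposal is essentially the approach the paper takes: the paper itself offers no proof of Theorem~\ref{th:LMI} beyond the congruence \eqref{Xtransfo} and the spectral factorization $\Phi(s)=G^{\mathsf{T}}(-s)G(s)$ as an ``intuitive explanation'' of sufficiency, and defers the full argument to \cite{Wil72b}, \cite{GeninHNSVX02}; your frequency-domain identity $v^{\mathsf{H}}\Phi(\imath\omega)v=[w^{\mathsf{H}}\ v^{\mathsf{H}}]\,\W(X,\M)\,[w^{\mathsf{T}}\ v^{\mathsf{T}}]^{\mathsf{T}}$ is exactly the quadratic-form reading of that congruence, and your treatment of necessity (deflating subspace of the para-Hermitian pencil in the strictly passive case, limiting arguments from the cited references otherwise) matches what those sources do. One caveat worth recording: the reduction of part (i) to part (iii) via $Z_\varepsilon(s):=Z(s+\varepsilon)+\varepsilon I_m$ does not work as stated, because the paper's definition of positive realness imposes no stability, so $Z$ may have open right-half-plane poles that the shift cannot remove and no strictly passive approximant exists; part (i) is genuinely the indefinite-certificate KYP result of \cite{GeninHNSVX02} (where $X$ is symmetric but need not be definite) rather than a limit of strictly passive certificates, so for (i) you must invoke that reference directly instead of your perturbation scheme. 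Since you already fall back on \cite{Wil72b}, \cite{GeninHNSVX02} for (i)--(ii), this does not change the verdict, but the perturbation sentence should be confined to part (ii).
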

The Linear Matrix Inequality (LMI) given is \eqref{KYP1} is also known as the Kalman-Yakubovich-Popov condition for passivity. In the sequel we will make use of the solution sets of these inequalities~:
\begin{subequations}\label{LMIsolnsets}
\begin{align}
&\XWpd :=\left\{ X\in \S \left|   \W(X,\M) \succeq 0,\ X \succ 0 \right.\right\},
\label{XpdsolWpsd} \\[1mm]
&\XWpdpd :=\left\{ X\in \S \left|   \W(X,\M) \succ 0,\ X \succ 0 \right.\right\}. \label{XpdsolWpd}
\end{align}
\end{subequations}

\begin{definition}
 Every solution of the LMI \eqref{XpdsolWpsd} is called a {\em certificate} for the passivity of the model $\M$
 and every solution of the LMI \eqref{XpdsolWpd} is called a {\em certificate} for the strict passivity of the model $\M$.
\end{definition}

If $D^{\mathsf{T}}+D$ is invertible, then the solutions in $\XWpd$ where $\W(X,\M)$ is of minimum rank,
are those for which $\rank \W(X,\M) = \rank (D^{\mathsf{T}}+D)  = m$, which is the case
if and only if the Schur complement of $D^{\mathsf{T}}+D$ in $\W(X,\M)$ is zero.  This Schur
complement is associated with the continuous-time \emph{algebraic Riccati equation (ARE)}
\begin{equation} \label{riccatic}
\mathsf{Ricc}(X) := -XA-A^{\mathsf{T}}X  -(C^{\mathsf{T}}-XB)(D^{\mathsf{T}}+D)^{-1}(C-B^{\mathsf{T}}X)=0.
\end{equation}
Each symmetric solution $X$ to (\ref{riccatic}) yields a spectral factorization $\Phi(s)= G^{\mathsf{T}}(- s) G(s)$ where $G(s)$ is $m\times m$ and regular. Therefore, the spectral zeros of $\Phi(s)$ are the union of the zeros of $G(s)$ and of $G^{\mathsf{T}}(-s)$. The matrix $X$ also corresponds to an invariant subspace spanned by the columns of $U:=\left[\begin{array}{cc} I_n \\ -X \end{array}\right]$
that remains invariant under multiplication with the \emph{Hamiltonian matrix}
\begin{equation}\label{HamMatrix}
H:=\left[\begin{array}{cc} A-B (D^{\mathsf{T}}+D)^{-1} C & - B (D^{\mathsf{T}}+D)^{-1} B^{\mathsf{T}} \\
C^{\mathsf{T}} (D^{\mathsf{T}}+D)^{-1} C & -(A-B (D^{\mathsf{T}}+D)^{-1} C)^{\mathsf{T}} \end{array}\right],
\end{equation}
\ie $U$ satisfies $HU=U A_F$ for a matrix $A_{F}=A-BF$ with $F := (D^{\mathsf{T}}+D)^{-1}(C-B^{\mathsf{T}}X)$.
We point out here that the solutions $X$ of the Riccati equations are certificates for the passivity of the model $\M$, but {\em not}
for its strict passivity. We will see that this distinction plays an important role in the sequel. It is also shown in \cite{Wil72b} 
that for a minimal model $\M$, the set of solutions $X$ of the Riccati equation \eqref{riccatic} has two extremal solutions $X_-$ and $X_+$ such that
all other certificates $X\in \XWpd$ satisfy $X_- \preceq X \preceq X_+$.

\medskip

We now give a brief introduction to special realizations of passive systems, known as port-Hamiltonian system models.
\begin{definition}\label{def:ph}
A linear time-invariant \emph{port-Hamiltonian (pH) system model} of a proper transfer function, has the standard state-space form
\begin{equation} \label{pH}
 \begin{array}{rcl} \dot x  & = & (J-R)Q x + (G-P) u,\\
y&=& (G+P)^{\mathsf{T}}Qx+(N+S)u,
\end{array}
\end{equation}
where the system matrices satisfy the symmetry conditions
\begin{equation} \label{sym}
\V:= \left[ \begin{array}{cccc} -J & -G \\ G^{\mathsf{T}} & N \end{array} \right]=-\mathcal V^{\mathsf{T}},\
\W:= \left[ \begin{array}{cccc} R & P \\ P^{\mathsf{T}} & S \end{array} \right] =\mathcal W^{\mathsf{T}}\succeq 0, \  Q=Q^{\mathsf{T}} \succeq 0.
\end{equation}
\end{definition}
Port-Hamiltonian systems were introduced from a different point of view \cite{VaJ14}, but they are also known to be passive.
If the model is strictly passive then $A$ and hence also $Q$ are both invertible. We can then choose $X=Q$ as certificate to
show that the model $\M:=\left\{(J-R)Q,G-P,(G+P)^{\mathsf{T}}Q,N+S\right\}$ satisfies the KYP condition. 
Conversely, let  $\M:=\left\{A,B,C,D\right\}$ be a state-space model satisfying the KYP condition \eqref{KYP1} with a given $X\succ 0$.
Then it can always be put in port-Hamiltonian form, as indicated in \cite{BMX17}.
We can use a symmetric factorization $X= T^{\mathsf{T}}T$, which implies the invertibility of $T$, and define a new realization
\[
\{A_T,B_T,C_T,D_T\} := \{TAT^{-1}, TB, CT^{-1}, D \}
\]
so that
\begin{eqnarray}  \nonumber
&&\left[ \begin{array}{cccc} T^{-\mathsf{T}} & 0\\ 0 & I_m
\end{array}
\right]
\left[ \begin{array}{cccc} -A^{\mathsf{T}}X-XA & C^{\mathsf{T}}-XB \\ C-B^{\mathsf{T}}X & D^{\mathsf{T}}+D
\end{array}
\right]
\left[ \begin{array}{cccc} T^{-1} & 0\\ 0 & I_m
\end{array}
\right] \\
\label{PH}
&& \qquad =
\left[ \begin{array}{cccc}-A_T & -B_T \\ C_T & D_T
\end{array}
\right]+
\left[ \begin{array}{cccc} -A_T^{\mathsf{T}} & C^{\mathsf{T}}_T \\ -B^{\mathsf{T}}_T & D^{\mathsf{T}}_T
\end{array}
\right]
\succeq 0.
\end{eqnarray}
We can then use the symmetric and skew-symmetric part of the matrix
\[
 \S := \left[ \begin{array}{cccc}-A_T & -B_T \\ C_T & D_T
\end{array}
\right]
\]
to define the coefficients of a pH representation  via
\[
\V := \left[ \begin{array}{cccc} J &  G \\ -G^{\mathsf{T}} & N \end{array} \right] :=  \frac{{\S} - {\S}^{\mathsf{T}}}{2},  \quad 
\W := \left[ \begin{array}{cccc} R & P \\ P^{\mathsf{T}} & S \end{array} \right] := \frac{{\S} +{\S}^{\mathsf{T}}}{2} \succeq 0.
\]
This construction yields $Q=I_n$ because of the chosen factorization $X=T^{\mathsf{T}}T$. A system with such a representation will be called a {\em normalized} port-Hamiltonian system. This shows that proper port-Hamiltonian systems are nothing but passive systems described in an appropriate coordinate system.
On the other hand, the passivity radius of a normalized port-Hamiltonian system has good robustness properties
in terms of its so-called passivity radius, as is shown below.
\begin{definition}
The \emph{passivity radius} $\rho_\M$ of a passive model $\M:=\{A,B,C,D\}$ is the smallest perturbation $\Delta_\M:=\{\Delta_A,\Delta_B,\Delta_C,\Delta_D\}$  which can make the model $\M+\Delta_\M$ loose its passivity.
\end{definition}
Therefore, if the perturbation $\Delta_\M$ is measured by 
$$ \| \Delta_\M \| :=  \left\| \left[\begin{array}{cc} \Delta_A &\Delta_B \\ \Delta_C & \Delta_D \end{array}\right] \right\|_2  , 
\quad \mathrm{or} \quad \left\| \left[\begin{array}{cc} \Delta_A &\Delta_B \\ \Delta_C & \Delta_D \end{array}\right] \right\|_F,
$$ 
then, for a {\em given} certificate $X\in \XWpdpd$, the passivity condition $\W(X,\M+\Delta_\M)\succeq 0$ for all perturbed systems $\M+\Delta_\M$ becomes just a linear matrix inequality in $\Delta_\M$, and hence 
yields a computable lower bound for $\rho_\M$, which is called the $X$-passivity radius $\rho_\M(X)$~:
\[ \rho_\M(X):= \inf_{\Delta_\M\in \mathbb C^{n+m,n+m}}\left\{ \| \Delta_\M \| \; | \; \det \W(X,\M+\Delta_\M) = 0\right\} \le  \rho_\M. \]
It follows (see \cite{MeV19}) that $\rho_\M$ is the supremum of these lower bounds over all certificates $X\in \XWpdpd$~:
\begin{equation} \label{passive}
\rho_{\M}:= \sup_{X\in \XWpdpd} \rho_{\M}(X).
\end{equation}
The following theorem, proven in \cite{MeV19}, shows that normalized port-Hamiltonian systems have an $X$-passivity radius that is at least as good as the corresponding 
non-normalized system.
\begin{theorem} \label{optPH}
 Let $\M=\{A,B,C,D\}$ be a model of a strictly passive transfer function $Z(s)$. Then for every certificate $X\in\XWpdpd$, we can construct a 
 normalized port-Hamiltonian system $$\M_T:=\{A_T,B_T,C_T,D_T\}=\left\{J-R,G-P,(G+P)^{\mathsf{T}},N+S\right\}$$ using a factorization $X=T^{\mathsf{T}}T$.
 The $X$-passivity radii $\rho_\M(X)$ and $\rho_{\M_T}(I)$ of these two models satisfy 
 $$ \rho_\M(X) \le \rho_{\M_T}(I)=\lambda_{\min}(\W).
 $$
\end{theorem}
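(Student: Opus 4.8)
The plan is to prove the statement in two stages --- first the identity $\rho_{\M_T}(I)=\lambda_{\min}(\W)$, then the bound $\rho_\M(X)\le\rho_{\M_T}(I)$ --- after recording one algebraic fact used throughout. Writing a perturbation $\Delta_\M$ as the block matrix $\left[\begin{smallmatrix}\Delta_A&\Delta_B\\ \Delta_C&\Delta_D\end{smallmatrix}\right]$ and setting $\Xi:=\diag(X,I_m)$ and $J:=\diag(I_n,-I_m)$ (which commute, $J$ being orthogonal), a direct expansion of \eqref{LMI} gives
\[
\W(X,\M+\Delta_\M)=\W(X,\M)-\bigl(\Xi J\Delta_\M+(\Xi J\Delta_\M)^{\mathsf T}\bigr).
\]
Since $J$ is orthogonal, $\|J\Delta+\Delta^{\mathsf T}J\|_2\le 2\|\Delta\|_2$, with equality for $\Delta=\tfrac12 JE$ whenever $E$ is symmetric; this will pin down the first radius.

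For the identity I would read off from the congruence \eqref{PH} both $\W(I,\M_T)=\S+\S^{\mathsf T}=2\W$ and $\W(I,\M_T)=\diag(T^{-\mathsf T},I)\,\W(X,\M)\,\diag(T^{-1},I)$. Because $X\in\XWpdpd$ we have $\W(X,\M)\succ0$, hence $\W\succ0$; and $\lambda_{\min}(\W)$ is independent of the chosen factorization $X=T^{\mathsf T}T$ (two such factors differ by an orthogonal left factor, which only conjugates $\W$ by an orthogonal block matrix). It then remains to evaluate $\rho_{\M_T}(I)$. By the displayed identity specialized to $X=I$, the matrix $\W(I,\M_T+\Delta_{\M_T})$ is singular iff $\W(I,\M_T)-E$ is, where $E:=J\Delta_{\M_T}+\Delta_{\M_T}^{\mathsf T}J$ is symmetric; the minimal $\|E\|_2$ making a positive definite matrix singular equals its smallest eigenvalue, here $\lambda_{\min}(\W(I,\M_T))=2\lambda_{\min}(\W)$, attained at $E=2\lambda_{\min}(\W)\,ww^{\mathsf T}$ with $w$ a unit eigenvector; and since $\min\{\|\Delta\|_2:J\Delta+\Delta^{\mathsf T}J=E\}=\tfrac12\|E\|_2$ (lower bound from the norm inequality above, upper bound from $\Delta=\tfrac12 JE$), we conclude $\rho_{\M_T}(I)=\tfrac12\cdot 2\lambda_{\min}(\W)=\lambda_{\min}(\W)$.

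For the bound I would invoke the structured-perturbation characterization of the $X$-passivity radius from \cite{MeV19},
\[
\rho_\M(X)=\min_{\|v\|_2=1}\ \frac{v^{\mathsf T}\W(X,\M)\,v}{2\,\|\Xi v\|_2},
\]
of which only the upper estimate --- the existence, for each unit $v$, of a destabilizing perturbation of norm $v^{\mathsf T}\W(X,\M)v/(2\|\Xi v\|_2)$ --- is needed below. The matching lower bound is immediate: if $\W(X,\M+\Delta_\M)v=0$ with $\|v\|_2=1$, pairing the displayed identity with $v$ yields $(\Xi Jv)^{\mathsf T}\Delta_\M v=\tfrac12 v^{\mathsf T}\W(X,\M)v$, hence $\|\Delta_\M\|_2\ge v^{\mathsf T}\W(X,\M)v/(2\|\Xi v\|_2)$ using $\|\Xi Jv\|_2=\|\Xi v\|_2$. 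I expect the upper estimate to be the main obstacle: the minimal destabilizing perturbation is not just a rank-one update along the pertinent $\W$-eigenvector but must exploit a further skew-symmetric freedom, and supplying that construction is exactly the content of \cite{MeV19}.

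Finally I would combine the two. Substituting $y=\diag(T^{-1},I)z$ in the Rayleigh quotient of the congruence from the second paragraph and using $\diag(T^{\mathsf T}T,I)=\Xi$,
\[
\lambda_{\min}(\W)=\tfrac12\,\lambda_{\min}\bigl(\W(I,\M_T)\bigr)=\tfrac12\min_{y\ne0}\ \frac{y^{\mathsf T}\W(X,\M)\,y}{y^{\mathsf T}\Xi\,y}.
\]
Letting $y^{\ast}$ (scaled to unit norm) attain this minimum and feeding $v=y^{\ast}$ into the characterization of $\rho_\M(X)$,
\[
\rho_\M(X)\le\frac{(y^{\ast})^{\mathsf T}\W(X,\M)y^{\ast}}{2\,\|\Xi y^{\ast}\|_2}\le\frac{(y^{\ast})^{\mathsf T}\W(X,\M)y^{\ast}}{2\,(y^{\ast})^{\mathsf T}\Xi\,y^{\ast}}=\lambda_{\min}(\W)=\rho_{\M_T}(I),
\]
where the middle inequality is the elementary estimate $\|\Xi y^{\ast}\|_2\|y^{\ast}\|_2\ge(y^{\ast})^{\mathsf T}\Xi y^{\ast}$, obtained from a two-term Cauchy--Schwarz on $(\|Xy^{\ast}_1\|_2,\|y^{\ast}_2\|_2)$ and $(\|y^{\ast}_1\|_2,\|y^{\ast}_2\|_2)$ together with $\|Xy^{\ast}_1\|_2\|y^{\ast}_1\|_2\ge(y^{\ast}_1)^{\mathsf T}Xy^{\ast}_1$, $y^{\ast}=(y^{\ast}_1,y^{\ast}_2)$ being the natural block partition. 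Complex perturbations are handled identically, with real parts inserted where appropriate, and do not decrease the radius.
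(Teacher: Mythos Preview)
The paper does not prove this theorem itself; it states the result and refers to \cite{MeV19} for the proof. Your proposal therefore goes well beyond what the paper offers, and the argument you give is correct. The identity $\rho_{\M_T}(I)=\lambda_{\min}(\W)$ is handled cleanly and self-containedly via the reduction to a symmetric perturbation $E$ of $\W(I,\M_T)=2\W$ together with the inverse map $\Delta=\tfrac12 JE$. For the inequality you invoke the characterization $\rho_\M(X)=\min_{\|v\|_2=1}v^{\mathsf T}\W(X,\M)v\big/\bigl(2\|\Xi v\|_2\bigr)$, also attributed to \cite{MeV19}, and the closing Cauchy--Schwarz step is fine (indeed $\langle y^\ast,\Xi y^\ast\rangle\le\|y^\ast\|_2\,\|\Xi y^\ast\|_2$ already gives it without the block splitting).

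One comment on the step you flag as the obstacle. At the minimizing unit vector $v$ the first-order optimality condition for $v\mapsto v^{\mathsf T}\W(X,\M)v/(2\|\Xi v\|_2)$ reads
\[
\W(X,\M)\,v \;=\; \alpha\Bigl(\tfrac{1}{\|\Xi v\|_2}\,\Xi^{2}v + \|\Xi v\|_2\,v\Bigr),\qquad \alpha:=\frac{v^{\mathsf T}\W(X,\M)v}{2\|\Xi v\|_2},
\]
and the \emph{rank-one} choice $\Delta=\alpha\,J\,u\,v^{\mathsf T}$ with $u:=\Xi v/\|\Xi v\|_2$ then satisfies $(\Xi J\Delta+\Delta^{\mathsf T}J\Xi)v=\W(X,\M)v$ and $\|\Delta\|_2=\alpha$. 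So the destabilizing perturbation achieving the upper bound is genuinely rank-one; the ``skew freedom'' you anticipate is just the factor $J$ already present in your setup, and the construction can be completed without further machinery from \cite{MeV19}. (A small notational caution: your $J=\diag(I_n,-I_m)$ clashes with the paper's use of $J$ for the skew-symmetric block in the port-Hamiltonian form; a different symbol would avoid confusion.)
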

The optimal passivity radius for all possible models for $Z(s)$ must therefore be attained by a normalized port-Hamiltonian model. 
The following theorem indicates that there is such a normalized port-Hamiltonian system with
optimal passivity radius and that it corresponds to a certificate $X$ for a family of passive systems, parameterized by the real parameter $\xi$~:
\begin{equation}
  \M_\xi := \{A+\frac{\xi}{2}I_n, B,C,D-\frac{\xi}{2}I_m\}, \quad Z_\xi(s):=C((s-\xi/2)I_n-A)^{-1}B+(D-\xi I_m/2).
\end{equation}
\begin{theorem} \label{intervalxi}
 Let $Z(s)$ be a given strictly passive transfer function, then there exists a port-Hamiltonian system model $\M$ of $Z(s)$ with the largest possible passivity radius, and it corresponds to a {\em common} certificate $X$ for all the transfer functions $Z_\xi(s)$ that are strictly passive
 where $0 < \xi < \Xi$ and $\Xi$ is the smallest positive number such that $Z_\Xi(s)$ is passive, but not strictly passive.
\end{theorem}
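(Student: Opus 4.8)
The engine of the proof is the elementary identity obtained by substituting $A+\tfrac{\xi}{2}I_n$ and $D-\tfrac{\xi}{2}I_m$ into the definition \eqref{LMI},
\[
\W(X,\M_\xi)=\W(X,\M)-\xi\begin{bmatrix} X & 0\\ 0 & I_m\end{bmatrix},
\]
from which two facts follow. (a) For a fixed $X\succ 0$ the right-hand side is strictly decreasing in $\xi$ in the Loewner order, so $\W(X,\M_\xi)\succeq 0$ forces $\W(X,\M_{\xi'})=\W(X,\M_\xi)+(\xi-\xi')\diag(X,I_m)\succ 0$ for every $\xi'<\xi$; combining this with Theorem~\ref{th:LMI}, a single $X\succ 0$ that certifies passivity of $Z_\xi$ certifies strict passivity of every $Z_{\xi'}$ with $\xi'<\xi$. (b) For a normalized port-Hamiltonian realization $\M_T$ of $Z$ one has $\W(I_n,\M_T)=2\W_T$ with $\W_T$ the port-Hamiltonian matrix of \eqref{sym}, so Theorem~\ref{optPH} gives $\rho_{\M_T}(I_n)=\lambda_{\min}(\W_T)$; applying the identity with $X=I_n$ to the shifted realization $\M_{T,\xi}$ of $Z_\xi$ yields $\W(I_n,\M_{T,\xi})=2\W_T-\xi I_{n+m}$, so by Theorem~\ref{th:LMI}(iii) $Z_\xi$ is strictly passive whenever $\xi<2\lambda_{\min}(\W_T)$. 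Granting that $Z_\Xi$ is passive (we return to this below), $\Xi$ then coincides with $\sup\{\xi\ge 0:Z_\xi\text{ is strictly passive}\}$, the right endpoint of the open interval of shifts preserving strict passivity.

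First I would prove the upper bound. Let $\M'$ be any realization of $Z$ and $X'\in\XWpdpd$. By Theorem~\ref{optPH} there is a normalized port-Hamiltonian realization $\M_T$ of $Z$ with $\rho_{\M'}(X')\le\lambda_{\min}(\W_T)$, and by fact (b) the shift $Z_\xi$ is strictly passive for all $\xi<2\lambda_{\min}(\W_T)$, whence $2\lambda_{\min}(\W_T)\le\Xi$ and so $\rho_{\M'}(X')\le\Xi/2$. Taking suprema over realizations and certificates and using \eqref{passive}, the largest passivity radius attainable by any model of $Z$ is at most $\Xi/2$.

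Next I would exhibit a model attaining it. Since $Z_\Xi$ is passive and $\M_\Xi$ is a minimal realization of it (controllability and observability are unaffected by shifting $A$ and $D$ by scalar matrices, and the McMillan degree is unchanged), Theorem~\ref{th:LMI}(ii) supplies a real symmetric $X\succ 0$ with $\W(X,\M_\Xi)\succeq 0$. By fact (a) this $X$ satisfies $\W(X,\M_\xi)\succ 0$ for all $0\le\xi<\Xi$, hence it is a common certificate for the strict passivity of every $Z_\xi$, $0<\xi<\Xi$; in particular $\W(X,\M)=\W(X,\M_0)\succ 0$, so $X\in\XWpdpd$ and we may factor $X=T^{\mathsf T}T$ and form, following Theorem~\ref{optPH}, the normalized port-Hamiltonian realization $\M_T$ of $Z$ built from $X$. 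Conjugating $\W(X,\M_\Xi)\succeq 0$ by $\diag(T^{-\mathsf T},I_m)$ and using $T^{-\mathsf T}XT^{-1}=I_n$ turns it into $2\W_T-\Xi I_{n+m}\succeq 0$, i.e.\ $\lambda_{\min}(\W_T)\ge\Xi/2$. Combining with the upper bound,
\[
\tfrac{\Xi}{2}\le\lambda_{\min}(\W_T)=\rho_{\M_T}(I_n)\le\rho_{\M_T}\le\tfrac{\Xi}{2},
\]
so equality holds throughout: $\M_T$ is a port-Hamiltonian model of $Z$ with the largest possible passivity radius $\Xi/2$, and it is the one built from the common certificate $X$. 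This is the assertion of the theorem.

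The step I expect to need the most care is the input claim that the endpoint $Z_\Xi$ of the strict-passivity interval is itself passive — rather than failing passivity outright, e.g.\ because a defective eigenvalue of $A$ reaches the imaginary axis — since this is exactly what makes the supremum of the passivity radius an attained maximum. I would settle it by a continuity/compactness argument: as $\xi\uparrow\Xi$ one can keep the certificates inside the order interval between the extremal solutions $X_-(\xi)$ and $X_+(\xi)$ of the associated Riccati equations, this interval remains bounded up to $\Xi$, and any accumulation point is — by continuity of $\W(\cdot,\cdot)$ and closedness of the cone of positive semidefinite matrices — a certificate for the passivity of $Z_\Xi$ in the sense of Theorem~\ref{th:LMI}(ii). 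Everything else is the bookkeeping above.
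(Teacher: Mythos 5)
The paper does not actually prove Theorem \ref{intervalxi}: it is imported from \cite{MeV19} (the surrounding text only remarks that the computation of $\Xi$ is treated there), so there is no in-paper argument to compare against. Your proof is essentially correct and is, in spirit, the argument of the cited reference. The identity $\W(X,\M_\xi)=\W(X,\M)-\xi\diag(X,I_m)$ is exactly the right engine: its monotonicity in $\xi$ gives both that the strict-passivity shifts form an interval $[0,\Xi)$ and that any certificate at the endpoint is a common strict-passivity certificate on the whole open interval, and the sandwich $\Xi/2\le\lambda_{\min}(\W_T)=\rho_{\M_T}(I)\le\rho_{\M_T}\le\Xi/2$ correctly identifies both the optimal radius and the optimal normalized port-Hamiltonian model. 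The upper bound via Theorem \ref{optPH} (every $\rho_{\M'}(X')$ is dominated by some $\lambda_{\min}(\W_T)$, which in turn forces strict passivity of $Z_\xi$ for all $\xi<2\lambda_{\min}(\W_T)$ and hence is at most $\Xi/2$) is clean and complete, granting the implicit standing assumption that all models are minimal.

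The one step that remains a sketch is the endpoint claim that $Z_\Xi$ is passive, and while your instinct about where the difficulty sits is right, the justification offered (``the interval between $X_-(\xi)$ and $X_+(\xi)$ remains bounded up to $\Xi$'') is asserted rather than proved, and as phrased it appeals to Riccati extremal solutions of the shifted systems whose uniform boundedness is precisely what is in question. The cheaper fix uses your own identity once more: the solution sets $\mathbb{X}_\xi:=\{X\succ 0\,:\,\W(X,\M_\xi)\succeq 0\}$ are nested decreasing in $\xi$ (a certificate for $Z_\xi$ is automatically one for $Z_{\xi'}$ when $\xi'<\xi$), so every certificate for every $\xi\in[0,\Xi)$ already lies in the single compact order interval $[X_-,X_+]$ attached to the unshifted system by Willems' theorem. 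Choosing $\xi_k\uparrow\Xi$ and $X_k\in\mathbb{X}_{\xi_k}$, a convergent subsequence yields $X^*$ with $\W(X^*,\M_\Xi)\succeq 0$ by continuity and closedness of the semidefinite cone, and $X^*\succ 0$ then follows from minimality and asymptotic stability of $A$ exactly as in the paper's discussion below \eqref{LMI}. With that in hand you do not even need to first establish passivity of $Z_\Xi$ and then invoke Theorem \ref{th:LMI}(ii): the limit certificate is produced directly, and the rest of your bookkeeping goes through unchanged.
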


It was shown in \cite{MeV19} that the calculation of $\Xi$ is a two dimensional optimization problem that can be solved efficiently.
Once the value of $\Xi$ is known, one can find a certificate $X\succ 0$ for the LMI $\W(X,\M_\Xi)\succeq 0$ using a Riccati equation
approach or the corresponding generalized eigenvalue problem 
$$ {\renewcommand{\arraystretch}{1.2}
S_\Xi(s) :=
\left[ \begin{array}{cc|c} 0 & A+\Xi I_n/2-sI_n & B \\
	A^{\mathsf{T}}+\Xi I_n/2+sI_n & 0 & C^{\mathsf{T}} \\ \hline B^{\mathsf{T}} & C & D^{\mathsf{T}}+D -\Xi I_m  \end{array} \right].
	}
$$
That certificate is then valid for the family of LMIs $\W(X,\M_\xi)\succ 0$ for $0 < \xi < \Xi$, and indicating that the
transfer functions $Z_\xi(s):=C((s-\xi/2)I_n-A)^{-1}B+(D-\xi I_m/2)$ are all {\em strictly passive}. We will use this fact later on to propose a family of interpolation conditions of low order passive transfer functions approximating a high order one. 

\section{Passive interpolation using spectral zeros} \label{sec:SZ}

We rederive here the technique developed by Sorensen \cite{Sor05} and Antoulas \cite{ACA05} for the construction of a degree $\hat n$ passive system 
$\hat Z(s)$ approximating a given passive system $Z(s)$ of McMillan degree $n\ge \hat n$, via interpolation in a set of so-called {\em spectral zeros}.  But we relax the conditions imposed on the transfer function $Z(s)$, because we will need this in the next section. Our proof is based on Sorenson's construction, but it applies also to non-passive systems  $Z(s)$. 

\begin{theorem}  \label{sorth:PR} Let $\M:=\{A,B,C,D\}$ be a minimal model of an $m\times m$ transfer function $Z(s)$ and let $S(s)$ be the system matrix of $\Phi(s)$. Assume that $D+D^{\mathsf{T}}\succ 0$ and that we are then given a basis for an $\hat n$ dimensional deflating subspace of $S(s)$  satisfying 
{\renewcommand{\arraystretch}{1.2}
\begin{equation} \label{defl}
\left[ \begin{array}{cc|c} 0 & A-sI_n & B \\
	A^{\mathsf{T}}+sI_n & 0 & C^{\mathsf{T}} \\ \hline B^{\mathsf{T}} & C & D^{\mathsf{T}}+D  \end{array} \right]
\left[ \begin{array}{c} U \\ V \\ \hline  W  \end{array} \right] = \left[ \begin{array}{c} V \\ -U \\ \hline  0  \end{array} \right]  (R-s I_{\hat n}),
\end{equation}}
where the spectrum of $R$ lies in the open right half plane. Then $\hat X:= - U^{\mathsf{T}}V$ is symmetric. If, moreover, $\hat X$ is invertible, then the reduced-order transfer function $\hat Z(s)$ of the projected system model 
$$\hat \M:=\{\hat A,\hat B,\hat C,\hat D\}=\{(U^{\mathsf{T}}V)^{-1}U^{\mathsf{T}}AV, (U^{\mathsf{T}}V)^{-1}U^{\mathsf{T}}B,CV,D\}$$
is strictly positive real, and if $\hat X$ is also positive definite, then $\hat Z(s)$ is strictly passive.
\end{theorem}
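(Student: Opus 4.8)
The plan is to extract from the deflating subspace relation \eqref{defl} a certificate of the form encountered in Section~\ref{sec:PH}, and then invoke Theorem~\ref{th:LMI}. First I would read off the three block rows of \eqref{defl}. The last block row gives $B^{\mathsf{T}}U + CV + (D^{\mathsf{T}}+D)W = 0$, so since $D^{\mathsf{T}}+D$ is invertible we can solve $W = -(D^{\mathsf{T}}+D)^{-1}(B^{\mathsf{T}}U + CV)$. The first block row gives $(A-sI_n)V + BW = V(R-sI_{\hat n})$, and the second gives $(A^{\mathsf{T}}+sI_n)U + C^{\mathsf{T}}W = -U(R-sI_{\hat n})$. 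Matching the coefficients of $s$ shows $-V = -V$ and $U = -U$ automatically; wait --- more carefully, the $s$-terms on both sides of the first row are $-sV = -sV$, consistent, and the constant terms give $AV + BW = VR$; similarly the second row gives $A^{\mathsf{T}}U + C^{\mathsf{T}}W = -UR$.

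\medskip

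\textbf{Symmetry of $\hat X$.} To show $\hat X = -U^{\mathsf{T}}V$ is symmetric, i.e. $U^{\mathsf{T}}V = V^{\mathsf{T}}U$, I would form $U^{\mathsf{T}}(AV + BW) + V^{\mathsf{T}}(A^{\mathsf{T}}U + C^{\mathsf{T}}W)$ in two ways. On one hand it equals $U^{\mathsf{T}}VR - V^{\mathsf{T}}UR = (U^{\mathsf{T}}V - V^{\mathsf{T}}U)R$. On the other hand, expanding, $U^{\mathsf{T}}AV + V^{\mathsf{T}}A^{\mathsf{T}}U + (U^{\mathsf{T}}B + V^{\mathsf{T}}C^{\mathsf{T}})W = (U^{\mathsf{T}}AV + (U^{\mathsf{T}}AV)^{\mathsf{T}}) + (U^{\mathsf{T}}B + (CV)^{\mathsf{T}})W$; substituting $W = -(D^{\mathsf{T}}+D)^{-1}(B^{\mathsf{T}}U + CV)$ and using that $D^{\mathsf{T}}+D$ is symmetric, this expression is symmetric in the obvious sense (it equals its own transpose). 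Hence $(U^{\mathsf{T}}V - V^{\mathsf{T}}U)R$ is symmetric; writing $K := U^{\mathsf{T}}V - V^{\mathsf{T}}U$, which is skew-symmetric by construction, we get $KR = (KR)^{\mathsf{T}} = R^{\mathsf{T}}K^{\mathsf{T}} = -R^{\mathsf{T}}K$, i.e. $KR + R^{\mathsf{T}}K = 0$. Since $R$ has spectrum in the open right half plane, this Sylvester/Lyapunov-type equation has only the trivial solution $K = 0$, giving symmetry of $\hat X$.

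\medskip

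\textbf{The reduced certificate.} Now assume $\hat X$ invertible. The columns of $\begin{bmatrix} V \\ -U \end{bmatrix}$ span (after the change of basis $V \mapsto I_{\hat n}$, legitimate since $U^{\mathsf{T}}V$ and hence $V$ has full column rank) a subspace encoding $\hat X$. I would define the projected matrices $\hat A = (U^{\mathsf{T}}V)^{-1}U^{\mathsf{T}}AV$, $\hat B = (U^{\mathsf{T}}V)^{-1}U^{\mathsf{T}}B$, $\hat C = CV$, $\hat D = D$ as in the statement, and then compute $\W(\hat X,\hat\M)$ with $\hat X = -U^{\mathsf{T}}V$. The key identity to establish is that $\W(\hat X,\hat\M)$ equals a congruence (by $\mathrm{blkdiag}(V, I_m)$, essentially) applied to the Schur-type object built from $W$: concretely I expect $\W(\hat X,\hat\M) = \begin{bmatrix} W^{\mathsf{T}} \\ \ast \end{bmatrix}(D^{\mathsf{T}}+D)\begin{bmatrix} W & \ast \end{bmatrix}$-type expression, more precisely that $\hat\W$ is congruent to $(D^{\mathsf{T}}+D)$ bordered appropriately, hence $\succ 0$ since $D^{\mathsf{T}}+D \succ 0$. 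This yields \eqref{KYP2} for $\hat\M$ with the caveat that the $(1,1)$ block must be checked: using $-\hat A^{\mathsf{T}}\hat X - \hat X\hat A$ and the relations $AV + BW = VR$, one finds this block equals $V^{\mathsf{T}}(\,\cdot\,)V$ where the inner matrix, after substituting $W$, is exactly the Riccati-type residual $-( \hat X\hat A + \hat A^{\mathsf{T}}\hat X) = \hat C^{\mathsf{T}}(\cdots) + (\text{positive term from }W)$, so that $\W(\hat X,\hat\M) \succ 0$. Then Theorem~\ref{th:LMI}(iii) gives strict positive realness, and if in addition $\hat X \succ 0$, part~(iii) with $\hat X \succ 0$ (equivalently the combination $\W(\hat X,\hat\M)\succ 0$, $\hat X\succ 0$) gives strict passivity.

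\medskip

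\textbf{Main obstacle.} The routine but delicate part is the precise bookkeeping in the third step: verifying that the $(1,1)$ block $-\hat A^{\mathsf{T}}\hat X - \hat X\hat A$ together with the off-diagonal block $\hat C^{\mathsf{T}} - \hat X\hat B$ assembles, via the congruence by $\begin{bmatrix} V & 0 \\ 0 & I_m\end{bmatrix}$, into $\W(X,\M)$ restricted to the deflating subspace, which by \eqref{defl} factors through $W$ and $D^{\mathsf{T}}+D \succ 0$. One must be careful that $\hat X = -U^{\mathsf{T}}V$ and not $+U^{\mathsf{T}}V$, that $\hat B$ carries the factor $(U^{\mathsf{T}}V)^{-1}$, and that the sign of $R-sI_{\hat n}$ on the right of \eqref{defl} (right-half-plane spectrum) is what forces the Lyapunov equation $KR + R^{\mathsf{T}}K = 0$ to have only the trivial solution and also what guarantees $\W(\hat X,\hat\M)\succ 0$ rather than merely $\succeq 0$. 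I would organize the computation by first proving the congruence identity
\[
\begin{bmatrix} V^{\mathsf{T}} & 0 \\ 0 & I_m \end{bmatrix}
\begin{bmatrix} -A^{\mathsf{T}}\hat X_{\mathrm{lift}} - \hat X_{\mathrm{lift}}A & C^{\mathsf{T}} \\ C & D^{\mathsf{T}}+D \end{bmatrix}
\begin{bmatrix} V & 0 \\ 0 & I_m \end{bmatrix}
= \W(\hat X,\hat\M)
\]
at the level of the full-order data restricted along $V$, and then reading off positive definiteness from the factorization through $W$. This keeps the argument parallel to the intuitive discussion preceding Theorem~\ref{th:LMI} and isolates all sign conventions in one place.
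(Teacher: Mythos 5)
Your derivation of the three block-row identities and your proof that $\hat X=-U^{\mathsf{T}}V$ is symmetric are correct and essentially equivalent to the paper's: both boil down to showing that the skew-symmetric matrix $K:=U^{\mathsf{T}}V-V^{\mathsf{T}}U$ satisfies $KR+R^{\mathsf{T}}K=0$ and hence vanishes because $\Lambda(R)\cap\Lambda(-R^{\mathsf{T}})=\emptyset$. (You reach the Lyapunov equation by substituting $W=-(D^{\mathsf{T}}+D)^{-1}(B^{\mathsf{T}}U+CV)$; the paper reaches it by noting that $[\,U^{\mathsf{T}}\;V^{\mathsf{T}}\;W^{\mathsf{T}}\,]S(s)[\,U^{\mathsf{T}}\;V^{\mathsf{T}}\;W^{\mathsf{T}}\,]^{\mathsf{T}}$ is para-Hermitian. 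Same content.)

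The gap is in your third step: the certificate you construct is \emph{not} positive definite, so Theorem \ref{th:LMI}(iii) cannot be invoked. Writing $N:=U^{\mathsf{T}}B+V^{\mathsf{T}}C^{\mathsf{T}}$, the block rows give $-\hat X\hat A-\hat A^{\mathsf{T}}\hat X=-NW=N(D^{\mathsf{T}}+D)^{-1}N^{\mathsf{T}}$ and $\hat C^{\mathsf{T}}-\hat X\hat B=N$, so that
\[
\W(\hat X,\hat\M)=\left[\begin{array}{c}-W^{\mathsf{T}}\\ I_m\end{array}\right](D^{\mathsf{T}}+D)\left[\begin{array}{cc}-W & I_m\end{array}\right],
\]
an $(\hat n+m)\times(\hat n+m)$ matrix of rank $m$: its Schur complement with respect to $D^{\mathsf{T}}+D$ vanishes identically ($\hat X$ solves the reduced Riccati equation), so $\W(\hat X,\hat\M)$ is positive semi-definite and singular for any $\hat n\ge 1$. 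The paper emphasizes exactly this point right after Theorem \ref{th:interpol}: $\hat X$ certifies passivity, not strict passivity. From the LMI alone you therefore only get that $\hat\Phi(\imath\omega)\succeq 0$, i.e.\ positive realness. The missing idea that upgrades this to \emph{strict} positive realness is the spectral factorization encoded in the rank-$m$ identity above: $\hat\Phi(s)=\hat G^{\mathsf{T}}(-s)(D^{\mathsf{T}}+D)\hat G(s)$ with $\hat G(s)=I_m-W(sI_{\hat n}-\hat A)^{-1}\hat B$, and since $\hat A+\hat BW=(U^{\mathsf{T}}V)^{-1}U^{\mathsf{T}}(AV+BW)=R$, the zeros of $\hat G$ are precisely the eigenvalues of $R$, which lie off the imaginary axis; together with $D^{\mathsf{T}}+D\succ 0$ this forces $\hat\Phi(\imath\omega)\succ 0$ for all $\omega$ including $\infty$. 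Likewise, strict passivity when $\hat X\succ 0$ does not follow from a (nonexistent) strict LMI but from reading the $(1,1)$ block $-\hat X\hat A-\hat A^{\mathsf{T}}\hat X=W^{\mathsf{T}}(D^{\mathsf{T}}+D)W\succeq 0$ as a Lyapunov inequality certifying asymptotic stability of $\hat A$ (imaginary eigenvalues are excluded because $\hat A+\hat BW=R$ has none). Your sign bookkeeping concerns in the "main obstacle" paragraph are legitimate but secondary; the definiteness claim is the step that actually fails.
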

\begin{proof}
The proof uses several arguments given in \cite{Sor05} for the more restrictive problem of a strictly passive transfer function $Z(s)$.
The symmetry of $\hat X:=-U^{\mathsf{T}}V$ follows from the following equation, obtained from multiplying \eqref{defl} on the left with 
$\left[ U^{\mathsf{T}} \; V^{\mathsf{T}} \; W^{\mathsf{T}} \right]$~:
$$ \left[ \begin{array}{cc|c} U^{\mathsf{T}} \! & \! V^{\mathsf{T}} \! & \! W^{\mathsf{T}} \! \end{array} \right] \!
\left[ \begin{array}{cc|c} 0 & A-sI_n & B \\
A^{\mathsf{T}}+sI_n & 0 & C^{\mathsf{T}} \\ \hline B^{\mathsf{T}} & C & D^{\mathsf{T}}+D  \end{array} \right]
\left[ \begin{array}{c} U \\ V \\ \hline  W  \end{array} \right] = (U^{\mathsf{T}}V-V^{\mathsf{T}}U) (R-s I_{\hat n}).
$$
Since the left hand side is para-Hermitian, the right hand side must also be para-Hermitian, which implies that
$$ (U^{\mathsf{T}}V-V^{\mathsf{T}}U) (R-s I_{\hat n})= (R^{\mathsf{T}}+s I_{\hat n})(V^{\mathsf{T}}U-U^{\mathsf{T}}V),
$$
and finally, 
$$ (U^{\mathsf{T}}V-V^{\mathsf{T}}U) R + R^{\mathsf{T}}(U^{\mathsf{T}}V-V^{\mathsf{T}}U) = 0.
$$
Since $R$ has all its eigenvalues in the right half plane, the matrix $(U^{\mathsf{T}}V-V^{\mathsf{T}}U)$ must be zero, which implies that
$\hat X:=-U^{\mathsf{T}}V$ is symmetric. 
Using the different rows of \eqref{defl} one obtains
\begin{eqnarray} 
\label{A} AV+BW&=&VR \\ 
\label{B} A^{\mathsf{T}}U+C^{\mathsf{T}}W&=&-UR\\ 
\label{C} B^{\mathsf{T}}U+CV+(D^{\mathsf{T}}+D)W&=&0
\end{eqnarray}
and from equations \eqref{A}, \eqref{B} and the symmetry of $\hat X$, it follows that
\begin{equation}\label{D} 
(U^{\mathsf{T}}AV+V^{\mathsf{T}}A^{\mathsf{T}}U)+(U^{\mathsf{T}}B+V^{\mathsf{T}}C)W = 0.
\end{equation}
If $\hat X=-U^{\mathsf{T}}V$ is invertible, we can construct the reduced-order system model 
\begin{equation} \label{hatmodel}
\hat \M:=\{\hat A,\hat B,\hat C,\hat D\}=
\{(U^{\mathsf{T}}V)^{-1}U^{\mathsf{T}}AV, (U^{\mathsf{T}}V)^{-1}U^{\mathsf{T}}B,CV,D\}
\end{equation}
with transfer function $\hat Z(s)$, and it then follows from \eqref{C} and \eqref{D} that
\begin{equation} \label{WXMhat} \W(\hat X,\hat \M) := \left[ \begin{array}{cc}
 - \hat A^{\mathsf{T}}\hat X-\hat X\,\hat A & \hat C^{\mathsf{T}} - \hat X\,\hat B \\
\hat C- \hat B^{\mathsf{T}}\hat X & \hat D^{\mathsf{T}}+\hat D
\end{array} \right] = \left[ \begin{array}{c} \! -W^{\mathsf{T}} \! \\ I_{m} \end{array}  \right]
(\hat D^{\mathsf{T}}+\hat D) \left[ \begin{array}{cc} -W & I_{m} \! \end{array}  \right] \succeq 0,
\end{equation} 
since $\W(\hat X,\hat \M) \left[ \begin{array}{c} I_{\hat n} \\ W \end{array} \right]=0$ and $ \hat D^{\mathsf{T}}+\hat D \succ 0$. 
It then follows from Theorem \ref{th:LMI} that the function $\hat \Phi(s)=\hat Z^{\mathsf{T}}(-s)+\hat Z(s)$ is non-negative on the imaginary axis,
and hence that $\hat Z(s)$ is positive real. Moreover, \eqref{WXMhat} implies that 
$$  \hat \Phi(s)= \hat G^{\mathsf{T}}(-s)(D^{\mathsf{T}}+D)\hat G(s), \quad \mathrm{where} \quad G(s)=I_m-W(sI_{\hat n}-\hat A)^{-1}\hat B
$$
and has as zeros the eigenvalues of $R$ since \eqref{A} implies that $\hat A+\hat BW=R$. Therefore, $\hat \Phi(s)$ has no zeros on the imaginary axis and hence must be strictly positive real.
Finally, if $\hat X$ is positive definite, 
then $\hat A$ is also asymptotically stable, which means that $\hat Z(s)$ is strictly passive.
\end{proof}

\begin{remark} \label{sorrm:PR}
In \cite{Sor05}, Sorensen proves that if $Z(s)$ is strictly passive, then $\Phi(s)$ has $n$ spectral zeros in the right half plane
and the conditional assumptions of the above theorem always hold true for every choice of $\hat n\le n$ right plane spectral zeros,
implying that $\hat X$ is positive definite and $\hat Z(s)$ is strictly passive. 
\end{remark}

\begin{corollary}
The above equation \eqref{WXMhat} indicates that $\W(\hat X,\hat\M)$ has minimum rank, which  implies that its Schur complement is zero, and hence that $\hat X$ solves the Riccati equation
$$ - \hat X \hat A - \hat A^{\mathsf{T}}\hat X - (\hat C^{\mathsf{T}} - \hat X \hat B)(\hat D^{\mathsf{T}}+ \hat D)^{-1}(\hat C-\hat B^{\mathsf{T}}X) =0. 
$$
Moreover, the corresponding feedback matrix $F:=(\hat D^{\mathsf{T}}+ \hat D)^{-1}(\hat C-\hat B^{\mathsf{T}}X) $ is a stabilizing feedback since $\W(\hat X,\hat\M) \succeq 0$.
\end{corollary}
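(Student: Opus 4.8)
The plan is to read both assertions straight off the factored identity \eqref{WXMhat} that was already established inside the proof of Theorem~\ref{sorth:PR}; essentially no new computation is required.

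\emph{The rank statement and the Riccati equation.} From \eqref{WXMhat} one may write $\W(\hat X,\hat\M)=M\,(\hat D^{\mathsf{T}}+\hat D)\,M^{\mathsf{T}}$ with $M:=\left[\begin{smallmatrix}-W^{\mathsf{T}}\\ I_m\end{smallmatrix}\right]$, where $M$ has full column rank $m$ and $\hat D^{\mathsf{T}}+\hat D\succ0$ is invertible; hence $\rank\W(\hat X,\hat\M)=m$. I would then invoke the purely algebraic block-elimination identity, valid for any symmetric block matrix whose $(2,2)$ block $\hat D^{\mathsf{T}}+\hat D$ is invertible: the congruence with $\left[\begin{smallmatrix}I & -\W_{12}(\hat D^{\mathsf{T}}+\hat D)^{-1}\\ 0 & I\end{smallmatrix}\right]$ brings $\W(\hat X,\hat\M)$ to the block-diagonal form $\diag(\text{Schur complement},\ \hat D^{\mathsf{T}}+\hat D)$, so that $\rank\W(\hat X,\hat\M)=m+\rank(\text{Schur complement})$. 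Since the rank is $m$, the Schur complement of $\hat D^{\mathsf{T}}+\hat D$ in $\W(\hat X,\hat\M)$ must vanish; but by the definition \eqref{LMI} of $\W(\cdot,\cdot)$ this Schur complement equals $-\hat X\hat A-\hat A^{\mathsf{T}}\hat X-(\hat C^{\mathsf{T}}-\hat X\hat B)(\hat D^{\mathsf{T}}+\hat D)^{-1}(\hat C-\hat B^{\mathsf{T}}\hat X)$, i.e.\ $\mathsf{Ricc}(\hat X)$ for the reduced model, so $\hat X$ solves the stated Riccati equation. (Alternatively one multiplies out $M(\hat D^{\mathsf{T}}+\hat D)M^{\mathsf{T}}$ directly: its $(1,1)$ block $W^{\mathsf{T}}(\hat D^{\mathsf{T}}+\hat D)W$ cancels exactly against $\W_{12}(\hat D^{\mathsf{T}}+\hat D)^{-1}\W_{21}$, exhibiting the Schur complement as the zero matrix with no rank bookkeeping at all.)

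\emph{The feedback matrix.} Reading the $(2,1)$ block off \eqref{WXMhat} gives $\hat C-\hat B^{\mathsf{T}}\hat X=-(\hat D^{\mathsf{T}}+\hat D)\,W$, hence $F=(\hat D^{\mathsf{T}}+\hat D)^{-1}(\hat C-\hat B^{\mathsf{T}}\hat X)=-W$; the same follows from \eqref{C} together with $\hat B^{\mathsf{T}}\hat X=-B^{\mathsf{T}}U$, which is immediate from \eqref{hatmodel} and $\hat X=-U^{\mathsf{T}}V$. Then $\hat A-\hat B F=\hat A+\hat B W$, and this was already computed in the proof of Theorem~\ref{sorth:PR}: by \eqref{A} and $U^{\mathsf{T}}V=-\hat X$ one has $\hat A+\hat B W=(U^{\mathsf{T}}V)^{-1}U^{\mathsf{T}}(AV+BW)=(U^{\mathsf{T}}V)^{-1}U^{\mathsf{T}}VR=R$, so the closed-loop matrix is similar to $R$ and therefore has all its eigenvalues in the open right half plane, in particular none on the imaginary axis. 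Equivalently, $\W(\hat X,\hat\M)\succeq0$ of minimum rank yields the regular spectral factor $\hat G(s)=I_m-W(sI_{\hat n}-\hat A)^{-1}\hat B$ of $\hat\Phi(s)$ whose zeros are the eigenvalues of $\hat A-\hat B F=R$; this is the precise sense in which $F$ is a ``stabilizing'' feedback for $\hat\M$ -- it assigns the closed-loop spectrum to the selected right-half-plane spectral zeros, well away from the imaginary axis.

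Every step is a one-liner, so there is no genuine obstacle. The two things to watch are that the rank identity $\rank\W(\hat X,\hat\M)=\rank(\hat D^{\mathsf{T}}+\hat D)+\rank(\text{Schur complement})$ be obtained from the block congruence -- it holds for any invertible $(2,2)$ block and does not rely on positive semidefiniteness -- and that the signs coming from \eqref{A}, \eqref{C} and from the factor $(U^{\mathsf{T}}V)^{-1}=-\hat X^{-1}$ be tracked consistently through to $F=-W$ and $\hat A-\hat B F=R$. The substantive work, namely producing the factorization \eqref{WXMhat}, was already done in the proof of Theorem~\ref{sorth:PR}, and the corollary is merely a reading of that identity.
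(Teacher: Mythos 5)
Your proof is correct and follows exactly the route the paper intends: the corollary is stated without a separate proof, its justification being the minimum-rank/Schur-complement argument already spelled out for the full-order case in Section~2, and your block-congruence computation (or the direct cancellation of $W^{\mathsf{T}}(\hat D^{\mathsf{T}}+\hat D)W$ against $\W_{12}(\hat D^{\mathsf{T}}+\hat D)^{-1}\W_{21}$) is precisely that argument applied to \eqref{WXMhat}. Your sign bookkeeping $F=-W$, $\hat B^{\mathsf{T}}\hat X=-B^{\mathsf{T}}U$ and $\hat A-\hat BF=\hat A+\hat BW=R$ is also right (and you correctly read the two occurrences of $X$ in the corollary as the typos $\hat X$).

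The one substantive point is your treatment of ``stabilizing.'' Your own computation shows that the closed-loop matrix is not merely similar to $R$ but equal to it, hence has all its eigenvalues in the \emph{open right} half plane; so in the conventional sense ($\hat A-\hat BF$ Hurwitz) the feedback $F$ is anti-stabilizing, and $\hat X$ is the solution of the reduced ARE associated with the right-half-plane spectral zeros, not the stabilizing one. The paper's bare assertion ``is a stabilizing feedback since $\W(\hat X,\hat\M)\succeq 0$'' cannot be read literally, and your fallback reading --- $F$ places the closed-loop spectrum at the selected spectral zeros, in particular off the imaginary axis, so that the spectral factor $\hat G(s)=I_m-W(sI_{\hat n}-\hat A)^{-1}\hat B$ is regular with zeros equal to the eigenvalues of $R$ --- is the only one consistent with what is actually proved. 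This is a looseness in the corollary's statement rather than a gap in your argument, but it is worth flagging explicitly rather than leaving ``stabilizing'' to stand with its usual meaning.
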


It was shown in \cite{FKLN07} that when $Z(s)$ is strictly passive and $R$ has distinct eigenvalues (which is the generic case), then the lower order model $\hat \M:=\{\hat A,\hat B,\hat C,\hat D\}$ constructed as in \eqref{hatmodel}, satisfies the following tangential interpolation conditions~:
$$
 Z(\lambda_j) Wr_j = \hat Z(\lambda_j) Wr_j, \;\; r^{\mathsf{T}}_j W^{\mathsf{T}} Z(-\lambda_j)=r^{\mathsf{T}}_jW^{\mathsf{T}} \hat Z(- \lambda_j), \;\;  j=1,...,\hat n, \;\; Z(\infty)=\hat Z(\infty),
$$
where $(\lambda_j,r_j), \; j=1,\ldots,\hat n$, is a set of self-conjugate (eigenvalue, eigenvector) pairs of $R$. 
When $R$ has distinct eigenvalues, this relates the method of Sorenson to the spectral zero interpolation approach of  
Antoulas \cite{ACA05,Ant05}. If some of the eigenvalues are repeated, the conditions imply also that derivatives at these points should match (see \cite{GVV04,Ant05}).

We give below a more complete (and simpler) proof of this connection, for the case where $Z(s)$ satisfies the relaxed conditions of 
Theorem \ref{sorth:PR}.
\begin{theorem}  \label{th:interpol} 
Let $\M:=\{A,B,C,D\}$ be a minimal model of an $m\times m$ transfer function $Z(s)$ and let $S(s)$ be the system matrix of $\Phi(s)$. Assume that $D+D^{\mathsf{T}}\succ 0$ and that we are then given a basis for an $\hat n$ dimensional deflating subspace of $S(s)$  satisfying 
{\renewcommand{\arraystretch}{1.2}
\begin{equation} \label{defl2}
\left[ \begin{array}{cc|c} 0 & A-sI_n & B \\
	A^{\mathsf{T}}+sI_n & 0 & C^{\mathsf{T}} \\ \hline B^{\mathsf{T}} & C & D^{\mathsf{T}}+D  \end{array} \right]
\left[ \begin{array}{c} U \\ V \\ \hline  W  \end{array} \right] = \left[ \begin{array}{c} V \\ -U \\ \hline  0  \end{array} \right]  (R-s I_{\hat n}),
\end{equation}}
where the spectrum of $R$ lies in the open right half plane and the matrix $\hat X:=-U^{\mathsf{T}}V$ is positive definite. 
Then the reduced-order transfer function $\hat Z(s)$ of the projected system 
$$\hat \M:=\{\hat A,\hat B,\hat C,\hat D\}=\{(U^{\mathsf{T}}V)^{-1}U^{\mathsf{T}}AV, (U^{\mathsf{T}}V)^{-1}U^{\mathsf{T}}B,CV,D\}$$
is strictly positive real, and it satisfies the following tangential interpolation conditions that define $\hat Z(s)$ completely~:
\begin{equation} \label{tanginterpol} Z(\lambda_j) Wr_j = \hat Z(\lambda_j) Wr_j, \;\; r^{\mathsf{T}}_j W^{\mathsf{T}} Z(-\lambda_j)=r^{\mathsf{T}}_jW^{\mathsf{T}} \hat Z(- \lambda_j), \;  j=1,...,\hat n, \;\; Z(\infty)=\hat Z(\infty),\end{equation}
where $(\lambda_j,r_j), \; j=1,\ldots,\hat n$, is a set of self-conjugate (eigenvalue, eigenvector) pairs of $R$. \end{theorem}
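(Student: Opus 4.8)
Strict positive realness of $\hat Z(s)$ is already furnished by Theorem~\ref{sorth:PR}; what remains is to verify the interpolation conditions in \eqref{tanginterpol} and that they pin $\hat Z(s)$ down uniquely. The starting point is the triple \eqref{A}--\eqref{C} of block rows of the deflating-subspace relation, together with two ``reduced'' companions obtained by projection. Left-multiplying \eqref{A} by $(U^{\mathsf{T}}V)^{-1}U^{\mathsf{T}}$ gives $\hat A + \hat B W = R$. Left-multiplying \eqref{B} by $V^{\mathsf{T}}$, and using that $\hat X := -U^{\mathsf{T}}V = -V^{\mathsf{T}}U$ is symmetric (Theorem~\ref{sorth:PR}) together with $U^{\mathsf{T}}AV = (U^{\mathsf{T}}V)\hat A = -\hat X\hat A$, gives the dual identity $\hat C^{\mathsf{T}}W - \hat A^{\mathsf{T}}\hat X = \hat X R$. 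Specialized to an eigenpair $R r_j = \lambda_j r_j$, these two relations drive the whole argument.

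For the right interpolation conditions, \eqref{A} gives $(\lambda_j I_n - A)Vr_j = BWr_j$, and since $\lambda_j$ is a spectral zero of $\Phi(s)$ it is not a pole of $Z$, so $Vr_j = (\lambda_j I_n - A)^{-1}BWr_j$ and $Z(\lambda_j)Wr_j = CVr_j + DWr_j$. From $\hat A + \hat BW = R$ we get $(\lambda_j I_{\hat n} - \hat A)r_j = \hat BWr_j$, hence $\hat Z(\lambda_j)Wr_j = \hat C r_j + \hat D Wr_j = CVr_j + DWr_j$ (using $\hat C = CV$, $\hat D = D$), so the two agree. For the left conditions, transposing \eqref{B} and using $r_j^{\mathsf{T}}R^{\mathsf{T}} = \lambda_j r_j^{\mathsf{T}}$ gives $r_j^{\mathsf{T}}U^{\mathsf{T}}(-\lambda_j I_n - A) = r_j^{\mathsf{T}}W^{\mathsf{T}}C$, so $r_j^{\mathsf{T}}W^{\mathsf{T}}Z(-\lambda_j) = r_j^{\mathsf{T}}U^{\mathsf{T}}B + r_j^{\mathsf{T}}W^{\mathsf{T}}D$; applying the dual identity to $r_j$ gives $r_j^{\mathsf{T}}\hat X(-\lambda_j I_{\hat n} - \hat A) = -r_j^{\mathsf{T}}W^{\mathsf{T}}\hat C$, so $r_j^{\mathsf{T}}W^{\mathsf{T}}\hat Z(-\lambda_j) = -r_j^{\mathsf{T}}\hat X\hat B + r_j^{\mathsf{T}}W^{\mathsf{T}}\hat D$, and since $\hat X\hat B = -(U^{\mathsf{T}}V)(U^{\mathsf{T}}V)^{-1}U^{\mathsf{T}}B = -U^{\mathsf{T}}B$ this again matches. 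The condition at infinity reads $Z(\infty) = D = \hat D = \hat Z(\infty)$, and since the pairs $(\lambda_j,r_j)$ are self-conjugate and $R$ is real, forming real combinations keeps all data real.

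For the claim that \eqref{tanginterpol} determines $\hat Z(s)$ completely, I would invoke the tangential-interpolation theory of \cite{GVV04,Ant05}: the data comprise $\hat n$ right conditions, $\hat n$ left conditions and the value at infinity, i.e.\ $2m\hat n + m^2$ scalar equations --- exactly the parameter count of an $m\times m$ proper rational transfer function of McMillan degree $\hat n$ (a realization $\{\hat A,\hat B,\hat C,\hat D\}$ modulo state-space equivalence). Regularity of the associated Loewner pencil, which is what yields a unique minimal interpolant, holds here because a degree-$\hat n$ interpolant --- namely $\hat Z(s)$ itself --- exists. The block-matrix bookkeeping above is routine; the delicate points are the well-posedness of the resolvents $(\pm\lambda_j I - A)^{-1}$ and $(\pm\lambda_j I - \hat A)^{-1}$, which I would settle by observing that $\pm\lambda_j$ are spectral zeros of $\Phi(s)$ (respectively of $\hat\Phi(s)$, via the factorization in the proof of Theorem~\ref{sorth:PR}) and hence not poles of $Z$ (respectively $\hat Z$), and the precise non-degeneracy assumption under which the Loewner pencil is regular --- this last point, rather than the interpolation identities themselves, being the real obstacle to a fully self-contained argument.
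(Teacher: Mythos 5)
Your argument is correct and reaches the same conclusions, but the route differs in one place worth noting. For the right conditions both you and the paper do essentially the same thing: extract $(\lambda_j I-A)Vr_j=BWr_j$ and $(\lambda_j I_{\hat n}-\hat A)r_j=\hat BWr_j$ from the deflating-subspace relation and identify both $Z(\lambda_j)Wr_j$ and $\hat Z(\lambda_j)Wr_j$ with $y_j=(CV+DW)r_j$. For the left conditions the paper argues structurally: it shows that $\bigl[\,(Ur_j)^{\mathsf{T}}\;(Vr_j)^{\mathsf{T}}\;(Wr_j)^{\mathsf{T}}\,\bigr]^{\mathsf{T}}$ lies in the kernel of $S(\lambda_j)$ and that the projected system matrix has $\hat\Phi(s)$ as its Schur complement, so that $\Phi(\lambda_j)Wr_j=0=\hat\Phi(\lambda_j)Wr_j$; combined with the shared value $y_j$ and the para-Hermitian identity $Z^{\mathsf{T}}(-\lambda_j)Wr_j=-Z(\lambda_j)Wr_j$ this yields the left conditions, and as a by-product it exhibits $(\lambda_j,Wr_j)$ as spectral zeros and zero directions of the reduced $\hat\Phi$. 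You instead derive the left conditions by a direct computation with the adjoint block row, via the dual identity $-\hat A^{\mathsf{T}}\hat X+\hat C^{\mathsf{T}}W=\hat XR$ and $\hat X\hat B=-U^{\mathsf{T}}B$; I checked the algebra and it is correct, and it is arguably more elementary, though it does not make the ``spectral zeros are inherited'' fact explicit. Both routes share the same implicit well-posedness assumption (that $\pm\lambda_j$ are not poles of $Z$ or $\hat Z$, which under minimality is exactly the non-decoupling issue the paper defers to Section~\ref{sec:Speczeros}), and neither the paper nor you actually proves the ``defines $\hat Z(s)$ completely'' clause beyond the $2m\hat n+m^2$ parameter count and a citation to the tangential-interpolation literature --- you are simply more candid in flagging that the Loewner-pencil regularity is the unproved ingredient there.
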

\begin{proof}
When multiplying the columns of \eqref{defl} with $r_j$, and evaluating this at $\lambda_j$, we obtain
{\renewcommand{\arraystretch}{1.2}
\[\left[ \begin{array}{cc|c} 0 & A-\lambda_j I_n & B \\
	A^{\mathsf{T}}+\lambda_j I_n & 0 & C^{\mathsf{T}} \\ \hline B^{\mathsf{T}} & C & D^{\mathsf{T}}+D  \end{array} \right]
\left[ \begin{array}{c} Ur_j \\ Vr_j \\ \hline  Wr_j  \end{array} \right] = 0,
\]}
which implies that $Wr_j$ is in the kernel of the Schur complement of the system matrix on the left~: 
\begin{equation} \label{Phi} 
 \Phi(\lambda_j) Wr_j  = \left( Z^{\mathsf{T}}(-\lambda_j)+ Z(\lambda_j) \right) Wr_j =0.
\end{equation}
It follows also from \eqref{defl} that the projected system matrix
{\renewcommand{\arraystretch}{1.2} $$
\left[ \begin{array}{cc|c} 0 & \hat A-s I_n & \hat B \\
	\hat A^{\mathsf{T}}+s I_n & 0 & \hat C^{\mathsf{T}} \\ \hline \hat B^{\mathsf{T}} & \hat C & \hat D^{\mathsf{T}}+ \hat D  \end{array} \right] :=  $$
$$
\left[ \begin{array}{cc|c} \! (U^{\mathsf{T}}V)^{-1}U^{\mathsf{T}} \! & & \\ & V^{\mathsf{T}} & \\ \hline & &  I_m  \end{array} \right] \! 
\left[ \begin{array}{cc|c} 0 & \!  A-s I_n \! & B \\
	\! A^{\mathsf{T}}+s I_n \! & 0 & C^{\mathsf{T}} \\ \hline B^{\mathsf{T}} & C & D^{\mathsf{T}}+D  \end{array} \right]
\! \left[ \begin{array}{cc|c} \! U(V^{\mathsf{T}}U)^{-1} \! & & \\ & V & \\ \hline & & I_m \end{array} \right]
$$}
has $\hat \Phi(s):= \hat Z^{\mathsf{T}}(-s)+ \hat Z(s)$ as Schur complement. Since we have 
$$ \left[ \begin{array}{cc|c} U(V^{\mathsf{T}}U)^{-1} & & \\ & V & \\ \hline & & I_m \end{array} \right]
\left[ \begin{array}{c} (V^{\mathsf{T}}U)r_j \\ r_j \\ \hline Wr_j \end{array} \right]
= \left[ \begin{array}{c} Ur_j \\ Vr_j \\ \hline Wr_j \end{array} \right]
$$
it follows that 
{\renewcommand{\arraystretch}{1.2}$$ \left[ \begin{array}{cc|c} 0 & \hat A-\lambda_j I_n & \hat B \\ \hat A^{\mathsf{T}}+\lambda_j I_n & 0 & \hat C^{\mathsf{T}} \\ \hline 
\hat B^{\mathsf{T}} & \hat C & \hat D^{\mathsf{T}}+ \hat D  \end{array} \right] 
\left[ \begin{array}{c} (V^{\mathsf{T}}U)r_j \\ r_j \\ \hline Wr_j \end{array} \right]=0
$$}
which then in turn implies that
\begin{equation} \label{hatPhi} 
\hat \Phi(\lambda_j) Wr_j  = \left( \hat Z^{\mathsf{T}}(-\lambda_j)+ \hat Z(\lambda_j) \right) Wr_j =0.
\end{equation}
This shows that the spectral zeros $\lambda_j$ and corresponding zero directions $Wr_j, j=1,\ldots,\hat n$, of $\hat \Phi(s)$ are a subset of those of the original system $\Phi(s)$.
To show that this also implies \eqref{tanginterpol} we use the same reasoning as above to obtain the equations 
$$ \left[ \begin{array}{cc} A-\lambda_j I_n & B \\ C & D  \end{array} \right]
\left[ \begin{array}{c} Vr_j \\Wr_j \end{array} \right] = \left[ \begin{array}{c} 0 \\ y_j \end{array} \right] , \quad 
 \left[ \begin{array}{cc} \hat A-\lambda_j I_n & \hat B \\ \hat C & D  \end{array} \right]
\left[ \begin{array}{c} r_j \\Wr_j \end{array} \right] = \left[ \begin{array}{c} 0 \\ y_j \end{array} \right],
$$
where $y_j:=(CV+DW)r_j$. This then implies that $y_j = Z(\lambda_j)Wr_j= \hat Z(\lambda_j)Wr_j$, which together with \eqref{Phi}, \eqref{hatPhi} and $\hat D=D$ yields \eqref{tanginterpol}. 
\end{proof}

\medskip

Notice that Theorem \ref{sorth:PR} constructs a reduced-order system and a corresponding certificate $\hat X$ for passivity, but not for strict passivity,
since the matrix $\W(\hat X,\hat\M)$ is positive semi-definite and singular, while we would prefer to construct a lower order model with a certificate for strict passivity.

\section{Parameterized interpolants} \label{sec:Param}

In this section we combine the results of Sections \ref{sec:PH} and \ref{sec:SZ} to propose a set of parameterized interpolants that have the property that the 
interpolants have a realization that is port-Hamiltonian and at the same time a passivity radius that has a sufficiently large lower bound.

For this, we proceed as follows. Let $Z(s)$ be a strictly passive transfer function of McMillan degree $n$, and suppose we are given a minimal model 
$\M:=\{A,B,C,D\}$ of $Z(s)$. We will then construct a lower order model via the spectral zeros method explained in Section \ref{sec:SZ} but applied to 
a so-called {\em shifted} transfer function~:
\begin{equation} \label{shiftedtf}
Z_\xi(s):= Z(s-\frac{\xi}{2}) -\frac{\xi}{2}I_m, \quad \mathrm{with \; model} \quad  \M_\xi:=\{A+\frac{\xi}{2}I_n,B,C,D-\frac{\xi}{2}I_m\},
\end{equation}
where $\xi$ is chosen in the open interval $(0,\Xi)$ of strictly passive systems $Z_\xi(s)$ (see Theorem \ref{intervalxi}).  
We then solve the tangential interpolation problem to produce a lower order model $\hat Z_\xi(s)$ 
of degree $\hat n < n$ using $\hat Z_\xi(\infty)=Z_\xi(\infty)=D-\frac{\xi}{2}I_m$ as well as interpolation conditions on a subset of the spectral zeros of 
$Z_\xi(s)$~:
\begin{equation} \label{shiftinterpol}
 Z_\xi(\sigma_j) W_\xi r_j = \hat Z_\xi(\sigma_j) W_\xi r_j, \quad r^{\mathsf{T}}_j W_\xi^{\mathsf{T}} Z_\xi(-\sigma_j)=r^{\mathsf{T}}_jW_\xi^{\mathsf{T}} \hat Z_\xi(-\sigma_j), 
 \quad  j=1,\ldots,\hat n, 
\end{equation} 
where $(\sigma_j,r_j), \; j=1,\ldots,\hat n$, are self-conjugate (eigenvalue, eigenvector) pairs of the matrix $R_\xi$, chosen to have its spectrum in the open right half plane,
and which is obtained from the deflating subspace equation
{\renewcommand{\arraystretch}{1.2}
\begin{equation} \label{shifted} 
\left[ \begin{array}{cc|c} 0 & \! A+\frac{\xi}{2}I_n-sI_n \! & B \\
	\! A^{\mathsf{T}}+\frac{\xi}{2}I_n+sI_n \! & 0 & C^{\mathsf{T}} \\ \hline B^{\mathsf{T}} & C & D^{\mathsf{T}}+D- \xi I_m  \end{array} \right]
\left[ \begin{array}{c} U_\xi \\ V_\xi \\ \hline W_\xi  \end{array} \right] \! = \! \left[ \begin{array}{c} V_\xi \\ \! -U_\xi \\ \hline  0  \end{array} \right]  (R_\xi-s I_{\hat n}).
\end{equation}
}
It follows that these $2m\hat n+m^2$ real conditions completely define the reduced-order model $\hat Z_\xi(s)$ and from Section \ref{sec:SZ}
that a realization of the reduced-order model is given by the quadruple
$$ \{(U_\xi^{\mathsf{T}}V_\xi)^{-1}U_\xi^{\mathsf{T}}(A+\frac{\xi}{2}I_n)V_\xi, (U_\xi^{\mathsf{T}}V_\xi)^{-1}U_\xi^{\mathsf{T}}B,CV_\xi,D-\frac{\xi}{2}I_m\}$$
which can also be written as
$$ \{(U_\xi^{\mathsf{T}}V_\xi)^{-1}U_\xi^{\mathsf{T}}AV_\xi+\frac{\xi}{2}I_{\hat n}, (U_\xi^{\mathsf{T}}V_\xi)^{-1}U_\xi^{\mathsf{T}}B,CV_\xi,D-\frac{\xi}{2}I_m\}.$$
We now rewrite these conditions in terms of the original matrix $Z(s)$ and its approximation $\hat Z(s)$ derived via this implicit shift technique.
\begin{theorem} \label{passiveproject}
 Let $\M:=\{A,B,C,D\}$ be a minimal state-space realization of a strictly passive transfer function $Z(s)$ of McMillan degree $n$, and let
 $$ \Xi := \sup_\xi \{ \xi \; | \; Z_\xi(s) \; \mathrm{is \; strictly \; passive} \}. $$ 
 Then for any $\xi\in(0,\Xi)$, we consider an $\hat n$ dimensional deflating subspace of the shifted pencil \eqref{shifted}
corresponding to the spectrum of a real matrix $R_\xi$ with eigenvalues in the right half plane. Then the matrices $U_\xi$ and $V_\xi$ have full column rank $\hat n$, 
the matrix $\hat X_\xi:= -U_\xi^{\mathsf{T}}V_\xi$ is symmetric and positive definite, and the low order transfer function $\hat Z(s)$ with model parameters
\begin{equation} \label{hatM} 
\hat \M:=\{\hat A,\hat B,\hat C,\hat D\}=
\{(U_\xi^{\mathsf{T}}V_\xi)^{-1}U_\xi^{\mathsf{T}}AV_\xi, (U_\xi^{\mathsf{T}}V_\xi)^{-1}U_\xi^{\mathsf{T}}B,CV_\xi,D\}
\end{equation}
satisfies the interpolation conditions $Z(\infty)=\hat Z(\infty)=D$ and for $\; j=1,...,\hat n$~:
\begin{equation} \label{interpol}
 Z(\sigma_j-\xi/2) W_\xi r_j = \hat Z(\sigma_j-\xi/2) W_\xi r_j, \quad
 r^{\mathsf{T}}_j W_\xi^{\mathsf{T}} Z(- \sigma_j-\xi/2)=r^{\mathsf{T}}_jW_\xi^{\mathsf{T}} \hat Z(- \sigma_j-\xi/2), 
\end{equation} 
where $(\sigma_j,r_j), \; j=1,...,\hat n$, are self-conjugate (eigenvalue, eigenvector) pairs of the matrix $R_\xi$.
Moreover, the matrix $\hat X_\xi$ is a certificate for the LMI
$$ \W(\hat X_\xi,\hat\M) := \left[ \begin{array}{cc} - \hat X_\xi \hat A - \hat A^{\mathsf{T}}\hat X_\xi & \hat C^{\mathsf{T}} - \hat X_\xi \hat B  \\
\hat C-\hat B^{\mathsf{T}}\hat X_\xi & \hat D^{\mathsf{T}}+\hat D\end{array} \right] \succeq  \xi .\diag(\hat X_\xi, I_m) \succ 0
$$
and $\xi/2$ is a lower bound for the passivity radius of the normalized port-Hamiltonian realization 
$\hat \M_{T_\xi}:=\{\hat J-\hat R,\hat G-\hat P,(\hat G+\hat P)^{\mathsf{T}},\hat N+\hat S\}$
obtained using $\hat X_\xi =T_\xi^{\mathsf{T}}T_\xi$ via the state-space transformation
\begin{equation}  \label{PHhat}
\left[ \begin{array}{cccc} T_\xi & 0\\ 0 & I_m
\end{array}
\right]
\left[ \begin{array}{cccc} \hat A & \hat B \\ \hat C & \hat D
\end{array}
\right]
\left[ \begin{array}{cccc} T_\xi^{-1} & 0\\ 0 & I_m
\end{array}
\right] =  \left[ \begin{array}{cc} \hat J- \hat R & \hat G - \hat P \\ (\hat G +\hat P)^{\mathsf{T}} & \hat N + \hat S \end{array} \right].
\end{equation}
\end{theorem}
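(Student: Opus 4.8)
The plan is to reduce everything to the unshifted spectral-zero machinery of Section~\ref{sec:SZ} applied to the model $\M_\xi=\{A+\tfrac{\xi}{2}I_n,B,C,D-\tfrac{\xi}{2}I_m\}$ of $Z_\xi(s)$, and then to translate the conclusions back to $Z(s)$ by undoing the shift, i.e., by setting $\hat Z(s):=\hat Z_\xi(s+\tfrac{\xi}{2})+\tfrac{\xi}{2}I_m$. I would begin by noting that $\M_\xi$ is minimal, since a shift of $A$ affects neither controllability nor observability, and that $Z_\xi(s)$ is strictly passive for $\xi\in(0,\Xi)$ by Theorem~\ref{intervalxi}; in particular $D^{\mathsf{T}}+D-\xi I_m=(D-\tfrac{\xi}{2}I_m)^{\mathsf{T}}+(D-\tfrac{\xi}{2}I_m)\succ0$, and the pencil on the left of~\eqref{shifted} is exactly the system matrix $S_\xi(s)$ of $\Phi_\xi(s)$, so the hypotheses of Theorems~\ref{sorth:PR} and~\ref{th:interpol} hold for $\M_\xi$.

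Next I would invoke Remark~\ref{sorrm:PR}: since $Z_\xi(s)$ is strictly passive, $\Phi_\xi(s)$ has exactly $n$ spectral zeros in the open right half plane, so the chosen $\hat n$-dimensional deflating subspace with $R_\xi$ having right-half-plane spectrum exists and $\hat X_\xi:=-U_\xi^{\mathsf{T}}V_\xi$ is symmetric and positive definite. Invertibility of $\hat X_\xi$ forces $U_\xi^{\mathsf{T}}V_\xi\in\R^{\hat n\times\hat n}$ to be invertible, hence $U_\xi$ and $V_\xi$ have full column rank $\hat n$. Writing $\hat\M_\xi:=\{\hat A_\xi,\hat B_\xi,\hat C_\xi,\hat D_\xi\}$ for the projected model, where $\hat A_\xi=(U_\xi^{\mathsf{T}}V_\xi)^{-1}U_\xi^{\mathsf{T}}(A+\tfrac{\xi}{2}I_n)V_\xi$, $\hat B_\xi=(U_\xi^{\mathsf{T}}V_\xi)^{-1}U_\xi^{\mathsf{T}}B$, $\hat C_\xi=CV_\xi$ and $\hat D_\xi=D-\tfrac{\xi}{2}I_m$, Theorem~\ref{sorth:PR} and its corollary give that $\hat Z_\xi(s)$ is strictly passive and that $\W(\hat X_\xi,\hat\M_\xi)\succeq0$ has minimum rank $m$, while Theorem~\ref{th:interpol} applied to $\M_\xi$ supplies $\hat Z_\xi(\infty)=Z_\xi(\infty)=D-\tfrac{\xi}{2}I_m$ and the tangential conditions~\eqref{shiftinterpol}.

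It then remains to undo the shift. Writing $\hat Z(s)=\hat Z_\xi(s+\tfrac{\xi}{2})+\tfrac{\xi}{2}I_m$ in realization form and using $(U_\xi^{\mathsf{T}}V_\xi)^{-1}U_\xi^{\mathsf{T}}(A+\tfrac{\xi}{2}I_n)V_\xi=(U_\xi^{\mathsf{T}}V_\xi)^{-1}U_\xi^{\mathsf{T}}AV_\xi+\tfrac{\xi}{2}I_{\hat n}$, the $\tfrac{\xi}{2}I_{\hat n}$ produced by the inverse shift cancels, so the realization of $\hat Z(s)$ is exactly~\eqref{hatM}, with feedthrough $(D-\tfrac{\xi}{2}I_m)+\tfrac{\xi}{2}I_m=D$, whence $Z(\infty)=\hat Z(\infty)=D$. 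Since $Z_\xi(s)=Z(s-\tfrac{\xi}{2})-\tfrac{\xi}{2}I_m$ and likewise $\hat Z_\xi(s)=\hat Z(s-\tfrac{\xi}{2})-\tfrac{\xi}{2}I_m$, substituting $s=\pm\sigma_j$ into~\eqref{shiftinterpol} and cancelling the common $\tfrac{\xi}{2}$ terms yields~\eqref{interpol}. For the LMI, substituting $\hat A=\hat A_\xi-\tfrac{\xi}{2}I_{\hat n}$ and $\hat D=\hat D_\xi+\tfrac{\xi}{2}I_m$ into $\W(\hat X_\xi,\hat\M)$ leaves the off-diagonal blocks unchanged while adding $\xi\hat X_\xi$ to the $(1,1)$ block and $\xi I_m$ to the $(2,2)$ block, so $\W(\hat X_\xi,\hat\M)=\W(\hat X_\xi,\hat\M_\xi)+\xi\,\diag(\hat X_\xi,I_m)\succeq\xi\,\diag(\hat X_\xi,I_m)\succ0$. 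Finally, with $\hat X_\xi=T_\xi^{\mathsf{T}}T_\xi$, applying the congruence $\diag(T_\xi^{-\mathsf{T}},I_m)$ to this LMI as in~\eqref{PH} and~\eqref{PHhat} turns the left-hand side into $2\hat\W$, where $\hat\W$ is the matrix of the normalized port-Hamiltonian realization $\hat\M_{T_\xi}$, and turns the lower bound into $\xi\,\diag(T_\xi^{-\mathsf{T}}\hat X_\xi T_\xi^{-1},I_m)=\xi I_{\hat n+m}$; hence $\hat\W\succeq\tfrac{\xi}{2}I_{\hat n+m}$, so $\lambda_{\min}(\hat\W)\ge\tfrac{\xi}{2}$, and Theorem~\ref{optPH} gives $\rho_{\hat\M_{T_\xi}}\ge\rho_{\hat\M_{T_\xi}}(I)=\lambda_{\min}(\hat\W)\ge\tfrac{\xi}{2}$.

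I expect the main obstacle to be purely organizational: keeping the two independent shifts --- the $\tfrac{\xi}{2}I_n$ in the state and the $-\tfrac{\xi}{2}I_m$ in the feedthrough --- consistent across the realization~\eqref{hatM}, the interpolation points $\sigma_j-\xi/2$, the LMI estimate and the port-Hamiltonian congruence; in particular one must check that the inverse-shift correction to $\hat A_\xi$ is exactly $\tfrac{\xi}{2}I_{\hat n}$, which is what makes~\eqref{hatM} come out in the stated form and the $+\xi\,\diag(\hat X_\xi,I_m)$ term appear cleanly. The one step that genuinely uses strict passivity rather than linear algebra is the appeal to Remark~\ref{sorrm:PR}: it upgrades the \emph{conditional} conclusions of Theorem~\ref{sorth:PR} to the \emph{unconditional} statements that $\hat X_\xi\succ0$, that $U_\xi$ and $V_\xi$ have full column rank, and that $\hat Z_\xi(s)$ --- and therefore $\hat Z(s)$ --- is strictly passive for every admissible choice of $\hat n\le n$ right-half-plane spectral zeros.
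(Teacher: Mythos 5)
Your proposal is correct and follows essentially the same route as the paper: apply Theorem \ref{sorth:PR} together with Remark \ref{sorrm:PR} (and Theorem \ref{th:interpol}) to the shifted model $\M_\xi$, then undo the shift to obtain the realization \eqref{hatM}, the translated interpolation points $\pm\sigma_j-\xi/2$, and the identity $\W(\hat X_\xi,\hat\M)=\W(\hat X_\xi,\hat\M_\xi)+\xi\,\diag(\hat X_\xi,I_m)$, finishing with the congruence by $\diag(T_\xi^{-\mathsf{T}},I_m)$ to bound the passivity radius of the normalized port-Hamiltonian realization by $\xi/2$. Your write-up is in fact slightly more explicit than the paper's (on the full-column-rank claim, the cancellation of the $\tfrac{\xi}{2}I_{\hat n}$ correction, and the appeal to Theorem \ref{optPH}), but it is the same argument.
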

\begin{proof}
It follows from the strict passivity of $Z_\xi(s)$ for any $\xi$ in the open interval $(0,\Xi)$ that $\hat Z_\xi(s)$ constructed using \eqref{shifted} and \eqref{hatM}, 
satisfies the conditions of Theorem \ref{sorth:PR} and Remark \ref{sorrm:PR}. Therefore, the matrix $\hat X_\xi:= -U_\xi^{\mathsf{T}}V_\xi$ is 
symmetric and positive definite. It then follows that the projected system
$$ \hat \M_\xi := \{(U_\xi^{\mathsf{T}}V_\xi)^{-1}U_\xi^{\mathsf{T}} A_\xi V_\xi, (U_\xi^{\mathsf{T}}V_\xi)^{-1}U_\xi^{\mathsf{T}}B_\xi,C_\xi V_\xi,D_\xi\}
$$
satisfies Theorem \ref{sorth:PR} with $\hat X_\xi := -U_\xi^{\mathsf{T}}V_\xi$ and hence we have 
$$ \W(\hat X_\xi,\hat \M_\xi)= \left[ \begin{array}{cc} -\hat X_\xi \hat A_\xi - \hat A^{\mathsf{T}}_\xi \hat X_\xi & \hat C^{\mathsf{T}}_\xi - \hat X_\xi \hat B_\xi \\
\hat C_\xi - \hat B^{\mathsf{T}}_\xi \hat X_\xi & \hat D_\xi + \hat D^{\mathsf{T}}_\xi \end{array} \right] \succeq 0.
$$
By using the relations between $\hat\M_\xi=\{\hat A_\xi,\hat B_\xi,\hat C_\xi,\hat D_\xi\}=\{\hat A+\frac{\xi}{2}I_n,\hat B,\hat C,\hat D-\frac{\xi}{2}I_m\}$ and $\hat\M=\{\hat A,\hat B,\hat C,\hat D\}$, we obtain the LMI
\begin{equation} \label{LMIxi} \W(\hat X_\xi,\hat \M)= \left[ \begin{array}{cc} -\hat X_\xi \hat A - \hat A^{\mathsf{T}} \hat X_\xi & \hat C^{\mathsf{T}} - \hat X_\xi \hat B \\
\hat C - \hat B^{\mathsf{T}} \hat X_\xi & \hat D + \hat D^{\mathsf{T}} \end{array} \right] \succeq 
\xi \left[ \begin{array}{cc} \hat X_\xi & 0 \\ 0 & I_m \end{array} \right] \succ 0
\end{equation}
which implies that the transformed port-Hamiltonian system \eqref{PHhat} has a passivity radius at least as large as $\frac{\xi}{2}$ since it follows from \eqref{LMIxi}
and $\hat X_\xi=T_\xi^{\mathsf{T}}T_\xi$, that 
$$ 
\frac12 \left[ \begin{array}{cccc} T_\xi^{\mathsf{-T}} & 0\\ 0 & I_m \end{array} \right] 
\W(\hat X_\xi,\hat \M)
\left[ \begin{array}{cccc} T_\xi^{-1} & 0\\ 0 & I_m \end{array} \right] 
= \left[ \begin{array}{cc} \hat R & \hat P \\ \hat P^{\mathsf{T}} & \hat S \end{array} \right] \succ \frac{\xi}{2} I_{\hat n +m}.
$$
The translation of interpolation conditions on the shifted system towards similar conditions on the original system follows directly from the 
identity \eqref{shiftedtf}.
\end{proof}

It follows from the above theorem and from Theorem \ref{optPH} that in order to have an optimal passivity radius for the reduced-order model, one should choose to put it in the normalized port-Hamiltonian form
$\{ T_\xi  \hat A  T_\xi^{-1} ,  T_\xi \hat B, \hat C  T_\xi^{-1} , \hat D\}$.

\begin{remark} \label{rem:41}
It follows from Theorem \ref{sorth:PR} that when choosing $\Xi < \xi < \lambda_{\min}(D^{\mathsf{T}}+D)$, 
the pencil $S_\xi(s)$ may still have a deflating subspace  \eqref{shifted} where $\hat X_\xi$ is positive definite, and hence yield a strictly passive reduced-order model. If this is the case, we will be able to increase the passivity radius even further.
This flexibility will be used in the section on numerical examples.
\end{remark}

\begin{remark}
 Notice that the interpolation points $\{\sigma_j,j=1,\ldots,\hat n \}$ and $\{-\sigma_j,j=1,\ldots,\hat n \}$ of the shifted system $\hat Z_\xi(s)$ 
 are mirror images of each other with respect to the origin, but this is not true anymore for the interpolation points $\{\sigma_j-\xi/2,j=1,\ldots,\hat n \}$ and $\{-\sigma_j-\xi/2,j=1,\ldots,\hat n \}$ 
 of the original system $\hat Z(s)$. 
 Moreover, since the interpolation points $\{\sigma_j,j=1,\ldots,\hat n \}$ are still in the open right half plane, 
the shifted interpolation conditions have the tendency to approximate better the transfer function in the left half plane.
\end{remark}

\section{Choosing the spectral zeros} \label{sec:Speczeros}

In this section we look at the selection of zeros and the effect of (near) non-minimality of the transfer function.
If we want to select particular spectral zeros, it is convenient to compute the individual corresponding 
eigenvectors~:
\begin{equation} \left[
 \begin{array}{cc|c} 0 & A-\lambda_j I_n & B \\  A^{\mathsf{T}}+\lambda_j I_n & 0 & C^{\mathsf{T}} \\ \hline 
B^{\mathsf{T}} &  C &  D^{\mathsf{T}}+  D  \end{array} \right] 
\left[ \begin{array}{c} Ur_j \\ Vr_j \\ \hline Wr_j \end{array} \right]=0, \quad Rr_j=\lambda_jr_j.
\end{equation}
It follows from the proof of Theorem \ref{th:interpol} that the interpolation condition becomes
\begin{equation} \label{interpolate} y_j := \left[ CVr_j +D Wr_j \right] = Z(\lambda_j) Wr_j = \hat Z(\lambda_j) Wr_j. 
\end{equation} 
Since $y_j$ and $Wr_j$ are both bounded quantities, $\lambda_j$ can not be a pole of $Z(s)$ unless
it is also a decoupling zero, implying that the system is not minimal. More formally, let $\cal N$ be the unobservable subspace of the pair $(A,C)$ then
$$   A{\cal N} \subset {\cal N}, \quad  C{\cal N}=0.
$$
This implies that $0\oplus {\cal N} \oplus 0$ is a deflating subspace of $S(s)$ with as spectrum the 
unobservable modes of the pair $(A,C)$. Choosing a vector in that deflating subspace yields 
$Ur_j=0$ and hence also a singular matrix $\hat X$. Moreover, one then has $Wr_j=0$ and the interpolation condition
\eqref{interpolate} then vanishes.
A similar reasoning on the dual system 
implies that the same problem occurs when using an uncontrollable mode of the pair $(A,B)$.
Therefore it is recommended to stay away from {\em nearly uncontrollable or unobservable modes}
when selecting spectral zeros as interpolation points. If we make sure that $\hat X$ has large eigenvalues, then 
we will stay away from non-minimality in the reduced-order model, and the interpolation conditions $\eqref{interpolate}$
will be well defined. Moreover, it makes sense to ``maximize" $\hat X$ since it is the Hamiltonian storage function
of the projected system: maximizing $\hat X$ can indeed be viewed as finding the dominant restriction of the 
Hamiltonian $X$.

\medskip

We used the following procedure to construct a nearly optimal selection of interpolating spectral zeros. Assume that
we computed the full matrix $X:=-U^{\mathsf{T}}V$, which is symmetric and positive definite. If we perform
the Cholesky decomposition with pivoting on this matrix, then the leading $\hat n \times \hat n$ submatrix $\hat X$
corresponds to a subset of $\hat n$ eigenvectors that is nearly optimal (a truly optimal selection would require to verify all possible symmetric permutations).  In practice this ``greedy" ordering of the spectral zeros works reasonably well on the examples we tried. We should point out that if one desires a {\em real} lower order model, then the pivoting strategy
should also make sure that the selected spectral zeros form a self-conjugate set, but that is easy to obtain via a post-processing of the greedy ordering : it amounts to looking for a leading subset of $\hat n$ self-conjugate spectral zeros 
in the preliminary ordered complex zeros.
We also point out that this selection procedure can also be implemented on a partial set of computed eigenvectors and 
spectral zeros, such as those one would compute using a Krylov-Schur method for large-scale problems (see e.g. \cite{BennerFS11}) combined with implicit filtering of undesired spectral zeros. Such large-scale issues, though, 
are beyond the scope of this paper.

\section{Using the robustness property} \label{sec:Robust}

It follows from Section \ref{sec:Param} that it is indicated to choose $\xi\in(0,\Xi)$ as large as possible, since this will yield an interpolant with a certificate 
for a larger passivity radius. This means that in that coordinate system we can allow for larger perturbations and still preserve passivity of the reduced-order model.
We can therefore expect to have more freedom in the numerical implementation of any algorithm computing the deflating subspace described in Theorem \ref{passiveproject}
or on the flexibility of its stopping criterion.

We first show that for a strictly passive system, there are many possibilities to construct strictly passive lower order models and that the corresponding projectors 
form an open set.

\begin{theorem}
 Let $\M:=\{A,B,C,D\}$ be a minimal state-space model for a strictly passive transfer function $Z(s)$ of McMillan degree $n$. Let $X\succ 0$ be a certificate for the 
 LMI that ensures that $Z(s)$ is strictly passive~:
\begin{equation} \label{ZLMI}
\W(X,\M)=
\left[ \begin{array}{cc} -X & 0\\ 0 & I_m \end{array} \right] 
\left[ \begin{array}{cc} A & B \\ C & D \end{array} \right] 
+ \left[ \begin{array}{cc} A^{\mathsf{T}} & C^{\mathsf{T}} \\ B^{\mathsf{T}} & D^{\mathsf{T}} \end{array} \right]
\left[\begin{array}{cc} -X & 0\\ 0 & I_m \end{array} \right] \succ 0. 
\end{equation}
If we choose any matrix $V\in \R^{n\times \hat n}$ of full column rank $\hat n$, and compute $U:=-XV\hat X^{-1}$, where $\hat X := V^{\mathsf{T}}XV$, then $U^{\mathsf{T}}V=-I_{\hat n}$ and 
the system 
\begin{equation} \label{modelhatn}
\hat \M:=\{\hat A,\hat B,\hat C,\hat D\}=\{(U^{\mathsf{T}}V)^{-1}U^{\mathsf{T}}AV,(U^{\mathsf{T}}V)^{-1}U^{\mathsf{T}}B,CV,D\}
\end{equation}
is a strictly passive lower order model of degree $\hat n$.
\end{theorem}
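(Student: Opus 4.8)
The plan is to show that $\hat X:=V^{\mathsf{T}}XV$ is itself a strict-passivity certificate for the reduced model $\hat\M$, by verifying that $\W(\hat X,\hat\M)$ is a congruence transform of $\W(X,\M)$ and then invoking Theorem~\ref{th:LMI}(iii).

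I would first clear away the bookkeeping. Because $X\succ0$ and $V$ has full column rank $\hat n$, the matrix $\hat X=V^{\mathsf{T}}XV$ is symmetric positive definite, hence invertible, so $U:=-XV\hat X^{-1}$ is well defined and, using that $X$ and $\hat X$ are symmetric,
\[
U^{\mathsf{T}}V=-\hat X^{-1}V^{\mathsf{T}}XV=-I_{\hat n}.
\]
Thus $U$ also has full column rank $\hat n$ and $(U^{\mathsf{T}}V)^{-1}=-I_{\hat n}$, so the definition \eqref{modelhatn} reduces to the explicit formulas
\[
\hat A=-U^{\mathsf{T}}AV=\hat X^{-1}V^{\mathsf{T}}XAV,\qquad \hat B=-U^{\mathsf{T}}B=\hat X^{-1}V^{\mathsf{T}}XB,\qquad \hat C=CV,\qquad \hat D=D.
\]

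The key step is the congruence identity
\[
\W(\hat X,\hat\M)=\begin{bmatrix}V^{\mathsf{T}}&0\\0&I_m\end{bmatrix}\W(X,\M)\begin{bmatrix}V&0\\0&I_m\end{bmatrix},
\]
which I would check block by block. In the $(1,1)$ block the factor $\hat X^{-1}$ inside $\hat A$ cancels against the leading $\hat X$, so $\hat X\hat A=V^{\mathsf{T}}XAV$ and hence $-\hat A^{\mathsf{T}}\hat X-\hat X\hat A=V^{\mathsf{T}}(-A^{\mathsf{T}}X-XA)V$. For the off-diagonal blocks, $\hat X\hat B=V^{\mathsf{T}}XB$ and $\hat C=CV$ give $\hat C^{\mathsf{T}}-\hat X\hat B=V^{\mathsf{T}}(C^{\mathsf{T}}-XB)$ and $\hat C-\hat B^{\mathsf{T}}\hat X=(C-B^{\mathsf{T}}X)V$, and the $(2,2)$ block is simply $\hat D^{\mathsf{T}}+\hat D=D^{\mathsf{T}}+D$. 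Since $\diag(V,I_m)$ has full column rank and $\W(X,\M)\succ0$ by hypothesis \eqref{ZLMI}, this congruence forces $\W(\hat X,\hat\M)\succ0$.

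Finally, $\hat X\succ0$ together with $\W(\hat X,\hat\M)\succ0$ are precisely the conditions \eqref{KYP2}, so by Theorem~\ref{th:LMI}(iii) the transfer function $\hat Z(s)$ of $\hat\M$ is strictly passive. I do not anticipate a real obstacle: the substantive points are recognizing that $V^{\mathsf{T}}XV$ is the certificate that makes the $\hat X^{-1}$ cancellations work, and -- if one insists on McMillan degree exactly $\hat n$ rather than state dimension $\hat n$ -- observing that $\hat\M$ is minimal for generic $V$, or else replacing $\hat\M$ by a minimal realization of $\hat Z(s)$, whose degree is then at most $\hat n$. (Citing Theorem~\ref{th:LMI} can also be avoided altogether: the $(1,1)$ block of $\W(\hat X,\hat\M)\succ0$ with $\hat X\succ0$ forces $\hat A$ to be Hurwitz via a Lyapunov argument, and plugging the vector $w$ that stacks $(\imath\omega I_{\hat n}-\hat A)^{-1}\hat B u$ over $u$ into $w^{\mathsf{H}}\W(\hat X,\hat\M)w>0$ collapses it to $u^{\mathsf{H}}\hat\Phi(\imath\omega)u>0$, i.e. strict positive realness.)
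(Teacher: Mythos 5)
Your proof is correct and follows essentially the same route as the paper: both arguments hinge on the observation that $\W(\hat X,\hat\M)$ with $\hat X=V^{\mathsf{T}}XV$ is exactly the congruence $\diag(V^{\mathsf{T}},I_m)\,\W(X,\M)\,\diag(V,I_m)$ of the original LMI, which preserves strict positive definiteness since $\diag(V,I_m)$ has full column rank. Your closing remarks on minimality and the direct Lyapunov/positive-realness argument are sound additions but not needed beyond what the paper itself does.
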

\begin{proof}
 It follows from \eqref{ZLMI} that 
\begin{equation} \label{LMIhatn} \left[ \begin{array}{cc} \! -V^{\mathsf{T}} X \!  & 0\\ 0 & \! I_m \! \end{array} \right] 
\left[ \begin{array}{cc} A & B \\ C & D \end{array} \right]\left[ \begin{array}{cc} V & 0\\ 0 & \! I_m \! \end{array} \right]  
+ \left[ \begin{array}{cc} \! V^{\mathsf{T}} \! & 0\\ 0 & I_m \end{array} \right]  \left[ \begin{array}{cc} A^{\mathsf{T}} & C^{\mathsf{T}} \\ B^{\mathsf{T}} & D^{\mathsf{T}} \end{array} \right]
\left[\begin{array}{cc} \! -XV \! & 0\\ 0 & \! I_m \! \end{array} \right] \succ 0. 
\end{equation}
Using $\hat X = V^{\mathsf{T}}XV$, $U\hat X=-XV$ and $U^{\mathsf{T}}V=-I_{\hat n}$, we can rewrite this as 
$$
\left[ \begin{array}{cc} -\hat X & 0\\ 0 & I_m \end{array} \right] 
\left[ \begin{array}{cc} \hat A & \hat B \\ \hat C & \hat D \end{array} \right] 
+ \left[ \begin{array}{cc} \hat A^{\mathsf{T}} & \hat C^{\mathsf{T}} \\ \hat B^{\mathsf{T}} & \hat D^{\mathsf{T}} \end{array} \right]
\left[\begin{array}{cc} -\hat X & 0\\ 0 & I_m \end{array} \right] \succ 0
$$
which proves the strict passivity of the lower order model, since $\hat X \succ 0$. 
Moreover, the matrices $U$, $V$, $\hat X$ and $U^{\mathsf{T}}V$ have full rank $\hat n$ by construction
and this is maintained in an open neighborhood of $U$ and $V$. Therefore the matrix inequality \eqref{LMIhatn}
is still valid and the constructed reduced-order models in a sufficiently small neighborhood of \eqref{modelhatn} are strictly passive.
\end{proof}

Let us suppose now that the deflating subspace described in \eqref{shifted} was inaccurate, either due to roundoff, or due to early termination of an iterative process to compute it. 
If we denote the {\em computed} quantities as $\widetilde U$, $\widetilde V$ and $\widetilde W$,
then we can construct $\widetilde R$ and residuals $\Delta_U$, $\Delta_V$ and $\Delta_W$
such that the following equation holds
{\renewcommand{\arraystretch}{1.3}
\begin{equation} \label{residual} 
\left[ \begin{array}{cc|c} 0 & A-sI_n & B \\
	A^{\mathsf{T}}+sI_n & 0 & C^{\mathsf{T}} \\ \hline B^{\mathsf{T}} & C & D^{\mathsf{T}}+D  \end{array} \right]
\left[ \begin{array}{c} \widetilde U \\ \widetilde V \\ \hline  \widetilde W  \end{array} \right] = 
\left[ \begin{array}{c} \widetilde V \\ -\widetilde U \\ \hline  0  \end{array} \right]  (\widetilde R-s I_{\hat n})
+ \left[ \begin{array}{c} \Delta_U \\ \Delta_V \\ \hline \Delta_W  \end{array} \right].
\end{equation}
}
Let us also denote the computed projected system as
$$\widetilde \M :=\{\widetilde A,\widetilde B,\widetilde C,\widetilde D\}:=
\{(\widetilde U^{\mathsf{T}}\widetilde V)^{-1}\widetilde U^{\mathsf{T}} A\widetilde V,(\widetilde U^{\mathsf{T}}\widetilde V)^{-1}\widetilde U^{\mathsf{T}} B,C\widetilde V,D\}.$$
If we define $\widetilde X:= - \widetilde U^{\mathsf{T}}\widetilde V$, then if follows from these equations that
\begin{equation} \label{Lyapunov} (\widetilde X^{\mathsf{T}} - \widetilde X)\widetilde R + \widetilde R^{\mathsf{T}}(\widetilde X^{\mathsf{T}} - \widetilde X) 
= (\Delta_U^{\mathsf{T}}\widetilde U+\Delta_V^{\mathsf{T}}\widetilde V+\Delta_W^{\mathsf{T}}\widetilde W)
- (\widetilde U^{\mathsf{T}}\Delta_U+\widetilde V^{\mathsf{T}}\Delta_V+\widetilde W^{\mathsf{T}}\Delta_W),
\end{equation}
which implies that $\widetilde X$ is nearly symmetric, and that the following matrix is nearly positive definite~:
\begin{equation} \label{perturbed} \nonumber
\left[ \begin{array}{cc} - \widetilde X \widetilde A - \widetilde A^{\mathsf{T}}\widetilde X^{\mathsf{T}}  & \widetilde C^{\mathsf{T}} - \widetilde X \widetilde B  \\ 
\widetilde C-\widetilde B^{\mathsf{T}}\widetilde X^{\mathsf{T}}  & \widetilde D^{\mathsf{T}}+\widetilde D\end{array} \right] 
 = \left[ \begin{array}{cc}  \widetilde U^{\mathsf{T}} A \widetilde V + \widetilde V^{\mathsf{T}} A^{\mathsf{T}}\widetilde U & \widetilde V^{\mathsf{T}} C^{\mathsf{T}} + \widetilde U^{\mathsf{T}} B  \\ 
C \widetilde V + B^{\mathsf{T}} \widetilde U & D^{\mathsf{T}}+ D\end{array} \right]
\end{equation}
$$ =
\left[ \begin{array}{cc} \widetilde W^{\mathsf{T}} \\ -I_m \end{array} \right] (\widetilde D^{\mathsf{T}}+\widetilde D) \left[ \begin{array}{cc} \widetilde W & -I_m \end{array} \right] + 
\left[ \begin{array}{cc} \Delta & \Delta_W \\ \Delta_W^{\mathsf{T}} & 0 \end{array} \right] \succeq 0,
$$ 
where $\Delta=(\widetilde X^{\mathsf{T}} - \widetilde X)\widetilde R + \widetilde U^{\mathsf{T}}\Delta_U+\widetilde V^{\mathsf{T}}\Delta_V -\Delta_W^{\mathsf{T}}\widetilde W$
is symmetric, because of \eqref{Lyapunov}.

Notice that this is not a valid passivity LMI since $\widetilde X$ is not symmetric. But if we replace $\widetilde X$ by its symmetric part
$\widetilde X_s=\frac12(\widetilde X+\widetilde X^{\mathsf{T}})$ then we obtain, using $\widetilde X_a=\frac12(\widetilde X-\widetilde X^{\mathsf{T}})$
$$ \left[ \begin{array}{cc} \! - \widetilde X_s \widetilde A - \widetilde A^{\mathsf{T}}\widetilde X_s \!  & \! \widetilde C^{\mathsf{T}} - \widetilde X_s \widetilde B \!  \\ 
\! \widetilde C-\widetilde B^{\mathsf{T}}\widetilde X_s \! & \widetilde D^{\mathsf{T}}+\widetilde D\end{array} \right] 
 \! =\! 
\left[ \begin{array}{cc} \widetilde W^{\mathsf{T}} \\ \! -I_m \! \end{array} \right] (\widetilde D^{\mathsf{T}}+\widetilde D) \left[ \begin{array}{cc} \! \widetilde W \! & \! -I_m \! \end{array} \right] + 
\left[ \begin{array}{cc} \Delta_{11} & \! \Delta_{12} \! \\ \Delta_{12}^{\mathsf{T}} & 0 \end{array} \right] \succeq 0,
$$
where $\Delta_{11}=\Delta -\widetilde X_a \widetilde A - \widetilde A^{\mathsf{T}} \widetilde X_a  $ and $\Delta_{12} = \Delta_W -\widetilde X_a \widetilde B $.
Notice that $\widetilde X_a$ is a solution of the Lyapunov-like equation \eqref{Lyapunov} and hence that the perturbation of the above passivity LMI is
of the order of the residual in \eqref{residual}. 

This shows that if we have a robustness margin in the unperturbed system, in the sense that its passivity radius is bounded away from 0, then strict passivity is
maintained for a reasonably large residual in \eqref{perturbed}. We can thus apply these ideas to the technique of shifted interpolation and guarantee
that the perturbations induced by the numerical algorithm do not destroy the strict passivity of the projected model.
Notice that when using iterative algorithms for large-scale problems, such robustness properties may come in handy since we may allow for  early termination of iterative schemes, provided the resulting perturbation lies within the robustness bounds.

\section{Numerical experiments} \label{sec:Numerical}
In this section, we illustrate the proposed methodology to construct passive reduced-order models by means of two numerical examples. All the experiments were conducted using \matlab2020b.

\subsection{RLC circuit:} We first illustrate the results of the parameterized interpolation technique by applying it to the $200^{th}$ order single-input/single-output model of a circuit described in \cite{GugA03}, where $100$ electrical capacitances, inductors, and resistances are interconnected. The limiting value  $\Xi\approx 0.56$ for the parameter $\xi$ was estimated using a mesh of equidistant points in the interval $[0,\Xi_{ub}]$, where $\Xi_{ub}$ is a conservative upper bound computed from the spectrum of $A$ (see \cite{MeV19}). We applied the selection procedure of spectral zeros described in Section \ref{sec:Speczeros} for lower order degrees $k=\{2,4,\ldots,20\}$ and for equidistant shifts $\xi\in [0,\Xi]$. 

\begin{figure}[tb]
	\centering
\includegraphics[width=12cm]{./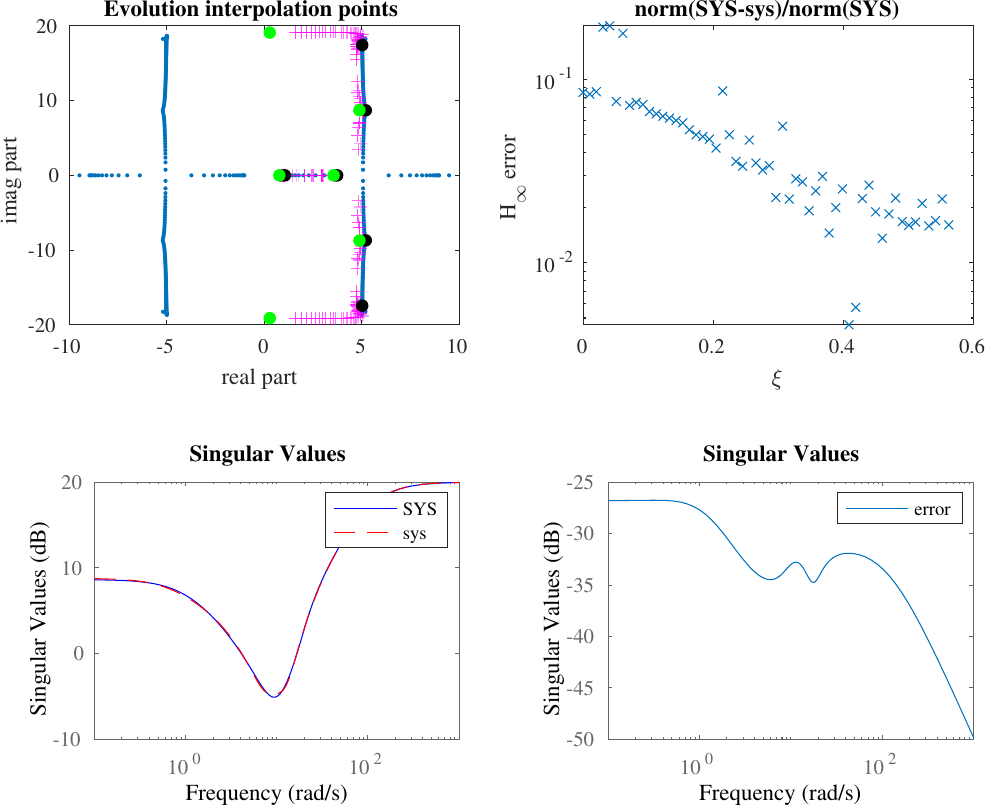}	
	\caption{Degree $6$ approximation of 200th order RLC network. Top left: spectral zeros (dots) and right half plane interpolation sets of points (+).  Black and green indicate the interpolation points corresponding to $\xi = 0$ and $\xi = \Xi$, respectively. Top right: relative $H_\infty$-error norm as a function of $\xi$. Bottom left: singular value plot of the original system (\SYS) and its best approximation (\sys). Bottom right: singular value plot of the corresponding error system.}
	\label{fig:degree6}
\end{figure}

In Figure \ref{fig:degree6}, we give the results of the low order model of degree $6$ for equidistant shifts $\xi \in [0,0.56]$. 
The top-left plot shows the original spectral zeros (in blue dots) and the selected right half plane interpolation points in magenta color for different values of $\xi$. Moreover, low-intensity magenta color $`+`$ belongs to lower values of $\xi$; likewise, high-intensity color belongs to larger values of $\xi$. One can see that the interpolation points are close to the original spectral zeros but with a shift towards the imaginary axis as $\xi$ increases. The top right plot gives the 
relative $H_\infty$-error norm as a function of $\xi$. One can see that the errors depend in a non-smooth manner on the parameter $\xi$, which is not so surprising since the selected interpolation points also depend on $\xi$. It is to be noted, though, that there is a general decreasing trend of the relative error as a function of $\xi$. This is also the case for the other low-order models we constructed. The bottom two plots give the singular value plot of the original system (\SYS) and its best approximation (\sys), and the singular value plot of the corresponding error system (\SYS-\sys), respectively.
\begin{figure}[tb]
	\centering
\includegraphics[width=8cm]{./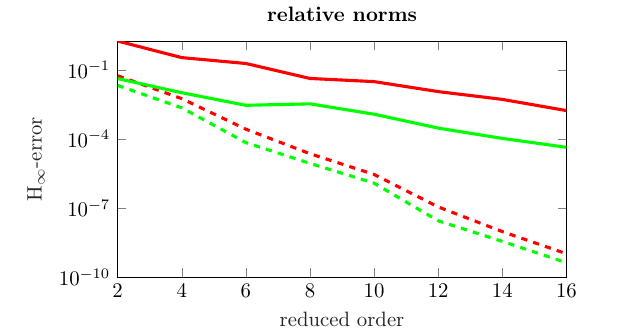}	
	\caption{Bounds for the low order error norms. The solid lines lines give upper and lower bounds for the models constructed for different values of $\xi$. The dashed lines give provable upper and lower bounds for the optimal $H_\infty$ reduced-order model.}
	\label{fig:Bounds}
\end{figure}
In order to show the effect of the order selection, we give in Figure \ref{fig:Bounds} a plot of lower and upper values 
of the achieved relative $H_\infty$-errors for the different values of $\xi$, as a function of the order $k$. In comparison, we also included provable upper and lower bounds of the relative error for the optimal $H_\infty$ approximation of the given system. Note that the upper bound is computed using the standard balanced truncation method, see, e.g., \cite{Ant05}, and the lower error bound for reduced models of order $r$ can be determined by $\sigma_{k+1}$, where $\sigma_{k+1}$ is the $(k+1)$th largest singular value of the original system \cite{morGlo84}.   It is clear from this plot that our selection procedure is far from optimal. One should be aware, though, that our
procedure is restricted to lower order systems that are passive and are generated by interpolation of special sets of points, which is a restrictive constraint.

\medskip

\subsection{Random example:} 
The second example is a random port-Hamiltonian system $\M:=\{J-R,G-P,G^{\mathsf{T}}+P^{\mathsf{T}},N+S\}$ with $X$-passivity radius  $\rho_\M(I_n)= 0.5$. The state-space model has state dimension $n=6$ and input/output dimension $m=2$. It was generated by constructing a random symmetric matrix $\W:=\left[\begin{smallmatrix} R & P \\ P^{\mathsf{T}} & S\end{smallmatrix}\right]$ with smallest eigenvalue $\lambda_{\min} =0.5$ and a random anti-symmetric matrix 
$\V:=\left[\begin{smallmatrix} -J & -G  \\ G^{\mathsf{T}} & N \end{smallmatrix}\right]$.
We then applied equidistant shifts $\xi \in [0,\Xi]$ and computed reduced-order models
of degree $\hat n=4$, based on the parameterized method explained in Theorem \ref{passiveproject}. But based on Remark \ref{rem:41}, we also took values of the shift $\xi > \Xi$, for as long as the construction of a positive definite matrix $\hat X_\xi$ was possible (which implies that $\hat A_\xi$ is still stable and $\hat D^{\mathsf{T}}_\xi+\hat D_\xi \succ 0$).

In Figure \ref{fig:nonpass}, the top-left plot shows the original spectral zeros (in blue dots) and selected right half plane interpolation points as magenta-colored $`+`$. One can see that as a function of $\xi$, the choice of four interpolation points is now much closer to each other 
than in the previous example. The top right plot gives the 
relative $H_\infty$-error norm as a function of $\xi$, and there also, one observes a smoother behavior since essentially the same interpolation points are being used. We point out here that in this plot, the blue crosses correspond to the values of
$\xi\in[0,\Xi]$, whereas the red circles, correspond to the values of $\xi > \Xi$.
The bottom two plots give the singular value plots of the original system (\SYS) and its best approximation (\sys) and the singular value plots of the corresponding error system (\SYS-\sys), respectively.
\begin{figure}[h]
	\centering
\includegraphics[width=12cm]{./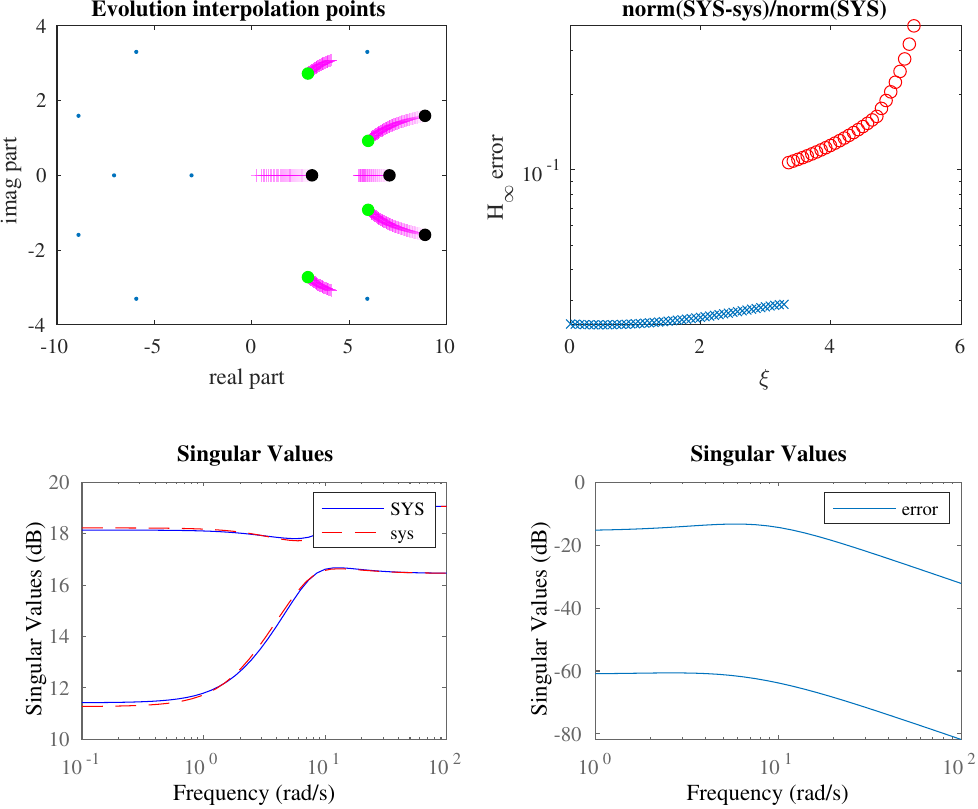}	

	\caption{Degree $4$ approximation of random $6$th order network.  Top left: spectral zeros (dots) and interpolation sets of points (crosses). Black and green indicate the interpolation points corresponding to $\xi = 0$ and $\xi \approx 1.60\Xi$, respectively. Top right: relative $H_\infty$-error norm as a function of $\xi$. Bottom left: singular value plots of the original system (\SYS) and its best approximation (\sys). Bottom right: singular value plots of the corresponding error system (\SYS-\sys).}
	\label{fig:nonpass}
\end{figure}

\section{Concluding remarks} \label{sec:Conclusion}
In this paper we developed a parameterized model reduction method based on the interpolation of the transfer function 
$Z(s)$ in a subset of the so-called spectral zeros of $Z(s)$. The parameterization lies in the fact that we consider now spectral zeros of shifted systems $Z_\xi(s)$, rather than the original transfer function $Z(s)$. Although the method is theoretically based on interpolation techniques, the algorithm itself is based on the computation of particular deflating subspaces of the ``Hamiltonian" pencils associated  with the shifted models $Z_\xi(s)$. It was also shown that these deflating subspaces do not need to be computed exactly, since the bounds on the passivity radius of the projected systems, gives a certain flexibility in the accuracy needed for the eigenspace computation. 
In this paper, we also proposed a new procedure for the selection of spectral zeros used as interpolation points for a  lower order model that is a good approximation.

\section*{Acknowledgment}
The research was performed during two visits of the third author to the Max Planck Institute in Magdeburg.

\end{document}